    \newcommand{\href}[2]{#2}
\theoremstyle{plain}
  \newtheorem{lemma}[equation]{Lemma}
  \newtheorem{proposition}[equation]{Proposition}
  \newtheorem{theorem}[equation]{Theorem}
  \newtheorem{corollary}[equation]{Corollary} 
    \newtheorem{question}[equation]{Question}
    \newtheorem{conjecture}[equation]{Conjecture}
\theoremstyle{definition}
  \newtheorem{definition}[equation]{Definition}
\theoremstyle{remark}
  \newtheorem{remark}[equation]{Remark}
\renewcommand{\thesection}{\arabic{section}}
\renewcommand{\theequation}{\thesection.\arabic{equation}}
 \DeclareFontFamily{U}{manual}{}
 \DeclareFontShape{U}{manual}{m}{n}{ <->  manfnt }{}
 \newcommand{\manfntsymbol}[1]{%
    {\fontencoding{U}\fontfamily{manual}\selectfont\symbol{#1}}}
\endgroup\end{trivlist}}
 \newenvironment{example}[1][]{
   \refstepcounter{equation}
   \begin{proof}[Example~\theequation%
   \@ifnotempty{#1}{ (#1)}.]
   }
  {\end{proof}}
  \DeclareFontFamily{OT1}{pzc}{}
  \DeclareFontShape{OT1}{pzc}{m}{it}{<-> s * [1.100] pzcmi7t}{}
  \DeclareMathAlphabet{\mathpzc}{OT1}{pzc}{m}{it}
\newif\ifhascomments \hascommentstrue
  \newcommand{\dan}[1]{{\color{red}[[\ensuremath{\bigstar\bigstar\bigstar} #1]]}}
  \newcommand{\matt}[1]{{\color{red}[[\ensuremath{\spadesuit\spadesuit\spadesuit} #1]]}}
  \newcommand{\dan}[1]{}
  \newcommand{\matt}[1]{}
\renewcommand{\AA}{\mathbb{A}}
\DeclareMathOperator{\Aut}{\ensuremath{\mathcal{A}\kern-.125em\mathpzc{ut}}}
\newcommand{\Pro}{\mathbb P}
\newcommand{\D}{\mathcal D}
\renewcommand{\emptyset}{\varnothing}
\newcommand{\E}{\mathcal E}
\DeclareMathOperator{\Endo}{\ensuremath{\mathcal{E}\kern-.125em\mathpzc{nd}}}
\newcommand{\Proj}{\mathbb P}
\newcommand{\GG}{\mathbb G}
\newcommand{\ix}{\mathcal X}
\DeclareMathOperator{\Projj}{Proj}
\let\hom\relax
\DeclareMathOperator{\hom}{Hom}
\DeclareMathOperator{\hgt}{ht}
\DeclareMathOperator{\Hom}{\ensuremath{\mathcal{H}\kern-.125em\mathpzc{om}}}
\renewcommand{\L}{\mathcal L}
\renewcommand{\O}{\mathcal O}
\newcommand{\QQ}{\mathbb Q}
\DeclareMathOperator{\rk}{rk}
\renewcommand{\setminus}{\smallsetminus}
\DeclareMathOperator{\spec}{Spec}
\newcommand{\U}{\mathcal U}
\newcommand{\iC}{\mathcal C}
\newcommand{\iI}{\mathcal I}
\newcommand{\W}{\mathcal W}
\newcommand{\X}{\mathcal{X}}
\newcommand{\x}{\text{x}}
\newcommand{\Y}{\mathcal{Y}}
\newcommand{\Z}{\mathcal{Z}}
\newcommand{\ZZ}{\mathbb{Z}}
 \def\ari[#1]{\ar@{^(->}[#1]}
 \def\are[#1]{\ar[#1]^{\txt{\'et}}}
 \def\areh[#1]{\ar[#1]|{\txt{$H$-eq}}^{\txt{\'et}}}
 \def\ars[#1]{\ar@{->>}[#1]}
 \newcommand{\dplus}{\ar@{}[d]|{\mbox{$\oplus$}}}
 \newcommand{\dtimes}{\ar@{}[d]|{\mbox{$\times$}}}
\DeclareMathOperator{\codim}{codim}
\DeclareMathOperator{\op}{op}
\DeclareMathOperator{\st}{st}
\DeclareMathOperator{\pst}{pst}
\DeclareMathOperator{\tst}{tst}
\DeclareMathOperator{\tame}{tame}
\DeclareMathOperator{\Pic}{Pic}
\title{Towards an Intersection Chow Cohomology Theory for GIT Quotients}
\author{Dan Edidin}
\address{Department of Mathematics, University of Missouri, Columbia MO 65211}
\email{edidind@missouri.edu}
\author{Matthew Satriano}
\address{University of Waterloo \\
Department of Pure Mathematics \\
Waterloo, Ontario \\
Canada  N2L 3G1}
\thanks{The first author was supported by Simons Collaboration Grant
  315460. The second author was partially supported by a Discovery Grant from the National Science and Engineering Board of Canada.}
\email{msatriano@uwaterloo.ca}
\subjclass[2010]{14C15, 14F43}
\begin{document}

\begin{abstract}
  We study the Fulton-MacPherson operational Chow rings of good moduli spaces
  of properly stable, smooth, Artin stacks. Such spaces are \'etale locally
  isomorphic to geometric invariant theory quotients of affine schemes, and are therefore natural extensions of GIT quotients. Our main result is that, with
  $\QQ$-coefficients, every operational class can be represented by a {\em topologically strong} cycle on the corresponding stack. Moreover, this cycle is unique modulo rational equivalence on the stack.
  Our methods also allow us to prove that if $X$ is the good moduli space of a properly stable, smooth, Artin stack then the natural map $\Pic(X)_\QQ \to A^1_{\op}(X)_\QQ$, $L \mapsto c_1(L)$ is an isomorphism.
\end{abstract}

\maketitle
\tableofcontents


\section{Introduction}
\label{sec:intro}
A long standing problem is to extend the classical intersection
product to singular varieties. Motivated by topology, one hopes to
construct an ``intersection Chow cohomology'' theory analogous to
Goresky and MacPherson's intersection homology. There have been
various approaches from this point of view using motivic theories, for
example \cite{Corti-Hanamura,Corti-HanamuraII, Friedlander-Ross}.

Earlier, Fulton and MacPherson \cite{FulMac:81} defined a formal Chow cohomology ring,
which we call the operational Chow ring, and proved that it equals the
classical intersection ring for smooth varieties. While the
operational Chow ring enjoys many natural functorial properties, the
product structure, which is given by composition of operations, does
not have a natural interpretation in terms of intersecting
subvarieties.

In this paper we focus on a class of varieties which we call {\em
  reductive quotient varieties}. These are varieties (or more
generally algebraic spaces) which are
\'etale locally
good
quotients of smooth varieties by reductive groups. When a reductive
group acts properly the quotient variety has finite quotient
singularities, and there is a well developed geometric intersection
theory on such varieties \cite{Vis:89, EdGr:98}. However, when
the stabilizers are positive dimensional the singularities are worse
and there is no intersection ring. For example, the cone over the
quadric surface is the good quotient of $\AA^4$ by $\GG_m$. More
generally, the Cox construction \cite{Cox:95} shows that all normal toric
varieties are good quotients of open sets in affine space. However,
only simplicial toric varieties have finite quotient singularities.

If $X = Z/G$ is a quotient variety with $Z$ smooth, then the quotient
map $Z \stackrel{p} \to X$ induces a stratification of $X$ coming from
the stratification of $Z$ by the dimension of stabilizers. Our main
result is that every operational class $c \in A^*_{\op}(X)_\QQ$ can be
represented by a cycle which satisfies certain transversality
conditions with respect to this stratification. Moreover, this cycle
is unique modulo rational equivalence on the associated quotient stack $[Z/G]$.
One consequence of this uniqueness is that the natural map
$\Pic(Z/G) \to A^1_{\op}(Z/G)$, $L \mapsto c_1(L)$ is surjective after tensoring with $\QQ$. In other words an integer multiple of every codimension-one operational class can be identified with the first Chern class of a line bundle.
(See
Theorem \ref{thm:pic-surjectivity} and the remarks thereafter.)



\subsection{Statement of results}
The point of view of this paper is stack-theoretic. A reductive
quotient variety $X = Z/G$ is the good moduli space \cite{Alp:13} of a smooth stack $\X =
[Z/G]$. Given a stack $\X$ with good moduli space $X$, we say that an
integral substack $\Y \subseteq \X$ is {\em strong} if $\Y$ is saturated
with respect to the good moduli space morphism $\pi \colon \X \to X$, i.e.~$\Y = \pi^{-1}(\pi(\Y))$. We say $\Y$ is {\em topologically strong} if $\Y = \pi^{-1}(\pi(\Y))_{red}$. We prove in Proposition \ref{prop.stratify} that $X$ has a stratification by subspaces with finite quotient singularities, and that if $\Y$ is topologically strong, then the image $Y = \pi(\Y)$ is a closed subspace of $X$ satisfying a transversality condition with respect to this stratification.

We
define the {\em relative strong Chow group} $A^k_{\st}(\X/X)$ to be
the subgroup of the Chow group $A^k(\X)$ generated by classes of
strong integral substacks of codimension $k$ and the
{\em relative topologically strong Chow group} $A^k_{\tst}(\X/X)$
to be the subgroup generated by topologically strong cycles.
Note that while the ordinary Chow groups of an Artin stack can be non-torsion in
arbitrarily high degree, the relative strong and topologically strong Chow groups vanish in
degree greater than the dimension of the good moduli space $X$, and so these Chow groups reflect more of the geometry of $X$. 
The strong relative Chow group of an Artin stack was originally defined in \cite{EdSa:16}. We show in Example \ref{ex:tst-st-differ} that $A^*_{\st}(\X/X)$ need not equal $A^*_{\tst}(\X/X)$ even rationally.

We say that an integral stack $\X$ with good moduli space $X$ is {\em
  properly stable} if there is a non-empty open set $\U \subseteq \X$
which is a tame stack in the sense of \cite{AOV:08} and which is saturated with respect to the good
moduli space morphism $\X \to X$.

\begin{theorem}
\label{thm:inj-pullback}
Let $\X$ be a smooth connected properly stable Artin stack with good moduli space $\pi\colon\X\to X$. Consider the map $\pi^*\colon A^k_{\op}(X)_\QQ \to A^k(\X)_\QQ$ defined by $c \mapsto c \cap [\X]$. Then $\pi^*$ is injective and its image is contained in $A^k_{\tst}(\X/X)_\QQ$. In particular every operational class $c\in A^*_{\op}(\X)_\QQ$ determines a topologically strong cycle which is unique modulo rational equivalence in $A^*(\X)_\QQ$.
\end{theorem}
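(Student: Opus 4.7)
The plan is to combine Noetherian induction on $\dim\X$ with the fact that on a tame stack the operational Chow ring of the good moduli space agrees rationally with the Chow ring of the stack; proper stability guarantees a nonempty tame open substack on which the induction can be started.

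\textbf{Step 1 (Base case on the tame locus).} Since $\X$ is properly stable, fix a nonempty saturated open substack $\U\subseteq\X$ which is tame, with open image $U\subseteq X$. Because $\U$ is tame there is a natural isomorphism $A^*(\U)_\QQ\cong A^*(U)_\QQ$ (Vistoli--Edidin--Graham). Moreover $U$ has finite quotient singularities, so by Kimura's result $A^*_{\op}(U)_\QQ\cong A^*(U)_\QQ$. Composing, the pullback $c\mapsto c\cap[\U]$ gives an isomorphism $A^*_{\op}(U)_\QQ \xrightarrow{\sim} A^*(\U)_\QQ$. The inverse sends a cycle class on $\U$ to its image on $U$, whose pullback to $\U$ is saturated, hence topologically strong. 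This establishes both injectivity and the topologically strong representability on the open piece.

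\textbf{Step 2 (Injectivity on $X$).} Suppose $c\in A^k_{\op}(X)_\QQ$ satisfies $c\cap[\X]=0$. Restricting to $\U$ gives $(c|_U)\cap[\U]=0\in A^*(\U)_\QQ$, and Step 1 forces $c|_U=0\in A^k_{\op}(U)_\QQ$. By Kimura's localization sequence for operational Chow groups, $c$ lifts to a class supported on the closed complement $Z:=X\smallsetminus U$. Since $\dim Z<\dim X$ and the stratification of Proposition~\ref{prop.stratify} decomposes $Z$ into strata that are themselves good moduli spaces of smooth properly stable stacks (up to modification), we push the argument down to a lower-dimensional situation and invoke the inductive hypothesis. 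The compatibility of $\pi^*$ with proper pushforward along a resolution of $\pi^{-1}(Z)_{red}$ lets us conclude $c=0$.

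\textbf{Step 3 (Topologically strong representatives).} By Step 1, the restriction $\pi^*(c)|_\U=(c|_U)\cap[\U]$ is represented by the pullback of a cycle on $U$, and its closure $\tau\in A^k_{\tst}(\X/X)_\QQ$ is a topologically strong cycle on $\X$ that agrees with $\pi^*(c)$ on $\U$. The difference $\pi^*(c)-\tau$ is supported on $\X\smallsetminus\U$, and an identical Noetherian induction along the stratification reduces it to a topologically strong cycle modulo rational equivalence. Uniqueness of the representative in $A^*(\X)_\QQ$ is then a formal consequence of the injectivity proved in Step 2.

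\textbf{Main obstacle.} The delicate point is the inductive step across the singular strata: the closed substack $\pi^{-1}(Z)_{red}$ is neither smooth nor properly stable in general, so the inductive hypothesis does not apply to it verbatim. The resolution lies in combining the stratification of Proposition~\ref{prop.stratify} (which exhibits each stratum as a quotient with finite inertia after passage to the smooth locus of a blow-up) with the functoriality of operational Chow under proper surjections, so that classes supported on the boundary can be produced from classes on stacks to which the inductive hypothesis does apply. This compatibility between the operational classes on $X$, the geometric representatives on $\X$, and the behavior under birational modifications of the stratification is the principal technical hurdle.
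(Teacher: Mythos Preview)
Your proposal has genuine gaps in both Steps~2 and~3, and the overall strategy differs substantially from the paper's.

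\textbf{Step 2.} The step ``By Kimura's localization sequence for operational Chow groups, $c$ lifts to a class supported on $Z$'' is not justified. Operational Chow groups do \emph{not} satisfy a localization exact sequence of the form $A^*_{\op}(Z\to X)\to A^*_{\op}(X)\to A^*_{\op}(U)$: an operational class is specified by its action on \emph{all} maps $T\to X$, and knowing that the restriction to $U$ vanishes does not produce a class in $A^*_{\op}(Z\to X)$. Kimura's paper supplies injectivity of $g^*\colon A^*_{\op}(X)_\QQ\to A^*_{\op}(X')_\QQ$ for $g$ proper surjective, which is exactly what the paper uses, but no excision statement of the kind you invoke. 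Even granting such a lift, $Z$ is not the good moduli space of a smooth properly stable stack, so the inductive hypothesis does not apply; you identify this obstacle yourself but the proposed workaround via Proposition~\ref{prop.stratify} is not an argument.

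\textbf{Step 3.} The difference $\pi^*(c)-\tau$ is a class in $A^k(\X)_\QQ$ supported on $\X\smallsetminus\U$. But $A^k_{\tst}(\X/X)$ is a \emph{subgroup} of $A^k(\X)$, not a quotient, and by Proposition~\ref{prop.stratify} every topologically strong integral substack must meet $\X^s$. Hence a nonzero class represented entirely by cycles in $\X\smallsetminus\X^{ps}$ cannot lie in $A^k_{\tst}(\X/X)$. Your Noetherian induction therefore has nothing to bite on: to conclude you would need $\pi^*(c)-\tau=0$, which you have not shown.

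\textbf{What the paper does instead.} For injectivity the paper inducts on the \emph{maximal stabilizer dimension}, not on $\dim X$: the Reichstein transform along the locus of maximal stabilizer produces a smooth properly stable $\X'$ with strictly smaller maximal stabilizer dimension and a proper surjective map $g\colon X'\to X$ of good moduli spaces; Kimura's injectivity for $g^*$ plus the inductive hypothesis on $\X'$ finishes the argument without any excision. For the containment in $A^k_{\tst}$ the paper takes a resolution $X'\to X$ with $X'$ smooth and quasi-projective, uses Riemann--Roch to write $g^*c$ as a polynomial in Chern classes of vector bundles, proves directly (via Segre classes and an explicit divisor argument) that such Chern polynomials cap to topologically strong cycles on $\X'$, and then shows proper pushforward along a cartesian diagram preserves topological strongness. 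Neither half relies on localization for operational Chow.
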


As a consequence of Theorem \ref{thm:inj-pullback} we are able to prove the following 
theorem about Picard groups of properly stable good moduli spaces of smooth Artin stacks.
\begin{theorem} \label{thm:pic-surjectivity}
Let $\pi\colon\X\to X$ be a properly stable good moduli space morphism with $\X$ smooth.
Then the natural map $\Pic(X)_\QQ \to A^1_{\op}(X)_\QQ$ is 
an isomorphism.
\end{theorem}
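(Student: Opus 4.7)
The plan is to leverage Theorem \ref{thm:inj-pullback} together with the identification $\Pic(\X)_\QQ = A^1(\X)_\QQ$ (since $\X$ is smooth) and Alper's descent criterion: a line bundle on $\X$ arises by pullback from $X$ if and only if its restriction to the residual gerbe at every closed point of $\X$ is trivial. I factor the Chern class map as $\Pic(X)_\QQ \to A^1_{\op}(X)_\QQ \xrightarrow{\pi^*} A^1(\X)_\QQ \cong \Pic(\X)_\QQ$, which simply sends $L$ to $\pi^*L$. For injectivity, the right-hand map is injective by Theorem \ref{thm:inj-pullback}, and the composition $L \mapsto \pi^*L$ is injective integrally because $\pi^*L \cong \O_\X$ implies $L \cong \pi_*\pi^*L \cong \pi_*\O_\X = \O_X$ by the defining property of a good moduli space morphism; hence $c_1$ is injective rationally.

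For surjectivity, take $c \in A^1_{\op}(X)_\QQ$. By Theorem \ref{thm:inj-pullback}, $\pi^*c$ is represented by a topologically strong cycle $\sum a_i [\Y_i] \in A^1(\X)_\QQ \cong \Pic(\X)_\QQ$. I would show that some positive power $\O_\X(\Y_i)^{\otimes n_i}$ descends to a line bundle $L_i$ on $X$. Granting this and setting $M := \sum_i \tfrac{a_i}{n_i} L_i \in \Pic(X)_\QQ$, one has $\pi^* c_1(M) = \pi^*c$, and injectivity of $\pi^*$ forces $c_1(M) = c$.

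To establish this descent, I would work \'etale locally around a closed point $x \in \X$ using the local structure theorem for good moduli spaces of Alper-Hall-Rydh: $\X$ is presented as $[W/G_x]$ with $W$ a smooth affine $G_x$-scheme, and $\Y_i$ corresponds to a smooth $G_x$-invariant divisor $V(f) \subset W$, where $f$ is a $G_x$-semi-invariant with some character $\chi$. The topologically strong condition forces $V(f) = \pi^{-1}(\pi(V(f)))_{red}$; unpacking this locally (using that $\pi$ corresponds on the affine side to $W \to \spec \Gamma(W,\O_W)^{G_x}$) should force some positive power $f^n$ to lie in the invariant subring, i.e., $\chi$ has finite order. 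Consequently $\O_\X(\Y_i)^{\otimes n}$ acts trivially on every residual gerbe at closed points, and Alper's descent criterion produces the desired $L_i \in \Pic(X)$.

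The main obstacle is this last step: passing from the stack-theoretic topologically strong condition on $\Y_i$ to the character-theoretic finiteness of the character associated to its defining semi-invariant. This requires a careful \'etale-local analysis coordinating the global quotient structure with the representation theory of the stabilizers, and I expect it to use the properly stable hypothesis in an essential way (to ensure the linear reductivity and local quotient presentations needed for the Alper-Hall-Rydh slice theorem to apply).
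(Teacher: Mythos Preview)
Your injectivity argument is fine, but the surjectivity argument has a genuine gap at the step you yourself flag as the main obstacle. You claim that if $\Y_i$ is topologically strong and locally cut out by a semi-invariant $f$ of character $\chi$, then $\chi$ must have finite order. This is false. The paper's own Example~\ref{ex:tst-st-differ} takes $\X=[\AA^4/\GG_m]$ with weights $(1,-1,1,-1)$ and exhibits the topologically strong divisor $\D'=[V(x_1^2x_2+x_3^2x_4)/\GG_m]$, whose defining equation has $\GG_m$-weight $1$; the associated character is the identity character of $\GG_m$, which has infinite order. So the implication ``topologically strong $\Rightarrow$ some power of the local equation is invariant'' simply does not hold, and your descent argument for the individual $\Y_i$ cannot go through.

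The deeper issue is that you are discarding information by passing to a cycle representative. The class $\pi^*c$ does restrict trivially to every residual gerbe, but this is a property of $\pi^*c$ as an \emph{operational} class, not of an arbitrary topologically strong cycle representing it. The paper exploits this directly in Theorem~\ref{thm:same-images}: for a closed point $x$ with stabilizer $G_x$, the residual gerbe $BG_x\hookrightarrow\X$ sits in a commutative square over $\spec k\to X$, and since $A^1_{\op}(\spec k)=0$ the restriction of $c$ to the point vanishes, forcing the restriction of the line bundle $\L$ representing $\pi^*c$ to $BG_x$ to be trivial. This yields descent of $\L$ to $\Pic(X)$ without ever invoking a topologically strong representative, and then injectivity of $\pi^*$ (Theorem~\ref{thm:inj-pullback}) finishes the proof exactly as you outlined.
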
 

\begin{corollary}
Let $Z$ be a smooth projective variety with a linearized action of a reductive group $G$ and let $X = Z/\!/G$ be the GIT quotient. If $Z^{ps} \neq \emptyset$ then the natural map $\Pic(X)_\QQ \to A^1_{\op}(X)_\QQ$ is an isomorphism 
\end{corollary}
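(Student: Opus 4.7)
The plan is to reduce the corollary to Theorem~\ref{thm:pic-surjectivity} by exhibiting $X = Z/\!/G$ as the good moduli space of a smooth, integral, properly stable Artin stack. The natural candidate is $\X := [Z^{ss}/G]$, where $Z^{ss} \subseteq Z$ denotes the semistable locus of the given linearization. Standard GIT gives that the quotient morphism $Z^{ss} \to Z/\!/G$ identifies $X$ with the good moduli space of $\X$. The stack $\X$ is smooth because $Z^{ss}$ is open in the smooth variety $Z$, and integral because $Z$ is a variety and $Z^{ss}$ is non-empty (since $Z^{ps} \subseteq Z^{ss}$).

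The task is then to verify that $\X$ is properly stable in the sense of the paper's definition. I would take $\U := [Z^{ps}/G]$, which is open in $\X$ and non-empty by hypothesis. Its geometric points have finite stabilizers, and these stabilizers are linearly reductive (automatically in characteristic zero, or under the appropriate tameness hypothesis in positive characteristic), so $\U$ is a tame Deligne--Mumford stack in the sense of \cite{AOV:08}.

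The only substantive step is to check that $\U$ is saturated with respect to $\pi \colon \X \to X$, i.e.\ that $Z^{ps} = \pi^{-1}(\pi(Z^{ps}))$. Suppose $z \in Z^{ss}$ satisfies $\pi(z) = \pi(z')$ for some $z' \in Z^{ps}$. Then $Gz'$, being closed in $Z^{ss}$ with finite stabilizer, is the unique closed $G$-orbit in $\overline{Gz} \cap Z^{ss}$, so $Gz' \subseteq \overline{Gz}$. Since the stabilizer of $z'$ is finite, $\dim Gz' = \dim G$, while $\dim Gz \leq \dim G$, forcing $Gz = Gz'$ and hence $z \in Z^{ps}$.

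With $\X$ smooth, integral, and properly stable, Theorem~\ref{thm:pic-surjectivity} applies directly and yields the desired isomorphism $\Pic(X)_\QQ \to A^1_{\op}(X)_\QQ$. There is no real obstacle in the proof: the only step with geometric content is the saturation check, which is a standard consequence of the closed-orbit characterization of properly stable points.
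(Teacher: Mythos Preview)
Your proposal is correct and follows exactly the route the paper intends: the corollary is an immediate application of Theorem~\ref{thm:pic-surjectivity} to $\X=[Z^{ss}/G]$, and the paper simply invokes Remark~\ref{remark:ps} (which asserts that $[Z^{ss}/G]$ is properly stable if and only if $Z^{ps}\neq\emptyset$) rather than spelling out the saturation argument you give. Your explicit verification of saturation via the closed-orbit characterization is a correct unpacking of what Remark~\ref{remark:ps} encodes.
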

(Here $Z^{ps}$ refers to the set of properly stable point for the linearized action of $G$ \cite[Definition 1.8]{MFK:94}. See also Remark \ref{remark:ps}.)
\begin{remark} \label{rem.injective}
For any normal algebraic space the map $\Pic(X) \to A^1_{\op}(X)$
is injective with integer coefficients because the map from $\Pic(X)$ to the divisor class
group $A^1(X)$ is injective \cite[21.6.10]{EGA4} and this map
factors through the natural map $\Pic(X) \to A^1_{\op}(X)$. Thus to establish
Theorem \ref{thm:pic-surjectivity} we need only prove surjectivity because
the good moduli space of any smooth Artin stack is normal \cite[Theorem 4.16]{Alp:13}
\end{remark}

\begin{remark} In characteristic 0, GIT quotients have rational singularities by Boutot's theorem \cite{Bou:87}. It follows from 
\cite[Proposition 12.4]{KoMo:92}
that if $k = {\mathbb C}$ then the map $\Pic(X)_\QQ \to A^1_{\op}(X)_\QQ$ is an isomorphism (cf. \cite[pp.4--5]{FMSS:95}).
\end{remark}

\begin{remark}
If $k = {\mathbb C}$ and $X$ is spherical (for example if $X$ is a toric variety) then map
$\Pic(X) \to A^1_{\op}(X)$ is known to be an isomorphism with integer coefficients. See Totaro's paper \cite[p22.]{Totaro} where he attributes this fact 
to Brion.
\end{remark}

From Theorem \ref{thm:inj-pullback}, we know that the image of $\pi^*$ is contained in $A^*_{\tst}(\X/X)_\QQ$. On the other hand, Example \ref{ex:tst-st-differ} shows that the image is not generally equal to $A^*_{\tst}(\X/X)_\QQ$. In Conjecture \ref{conj:image-pi-pst}, we state a precise conjectural description of the image. 
The following theorem 
summarizes some partial results in this direction; see also Theorem \ref{thm:conj-pst}.

\begin{theorem} \label{thm:surj-pullback}
Let $\X$ be a properly stable smooth Artin stack with good moduli space $\pi\colon\X\to X$. Then the following hold:
  \begin{enumerate}[label=\emph{(\alph*)}, ref=\alph*]
    \item \label{surj::quot-sing}
If $X$ is smooth or has finite quotient singularities, then $A^*(X)_\QQ\simeq A^*_{\op}(X)_\QQ\simeq A^*_{\tst}(\X/X)_\QQ$, where the second isomorphism is induced by $\pi^*$.

  \item \label{surj::A1} If $X$ has an ample line bundle, then $\pi^* A^1_{\op}(X) = A^1_{\st}(\X/X)$. In particular, $\pi^*$ induces an isomorphism $A^1_{\op}(X)_\QQ \simeq A^1_{\st}(\X/X)_\QQ$.

\item \label{surj::reg-embedded}
If $\Z \subseteq \X$ is a regularly embedded strong substack of arbitrary
codimension $k$ then $[\Z] \in \pi^*(A^k_{\op}(X))_\QQ$.

\item \label{surj::dimX}
Assume $k$ equals $\dim X -1$ or $\dim X$. 
Then the map $\pi^* \colon A^k_{\op}(X)_\QQ \to A^k_{\tst}(\X/X)_\QQ$ is an 
isomorphism. If in addition the maximal saturated tame substack of $\X$ 
is representable, then $A^k_{\st}(\X/X)_\QQ=A^k_{\tst}(\X/X)_\QQ$. 
\end{enumerate}
\end{theorem}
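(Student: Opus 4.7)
I would handle the four parts in the order (a), (c), (b), (d). For (a), combine the known identification $A^*(X)_\QQ\simeq A^*_{\op}(X)_\QQ$ (classical for $X$ smooth; due to Vistoli--Kimura when $X$ has at worst finite quotient singularities) with Theorem \ref{thm:inj-pullback}, which supplies injectivity of $\pi^*$ and containment of its image in $A^*_{\tst}(\X/X)_\QQ$. For surjectivity it suffices to observe that a topologically strong integral substack $\Y\subseteq\X$ has image $Y=\pi(\Y)$ defining a class $[Y]\in A_*(X)_\QQ\simeq A^*_{\op}(X)_\QQ$ whose pullback $\pi^*[Y]$ equals a positive rational multiple of $[\Y]$ (the multiplicity controlled by stabilizers via the tame DM structure on the saturated tame locus supplied by proper stability); hence $[\Y]\in\pi^*A^*_{\op}(X)_\QQ$.

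For (c), I would work \'etale locally on $X$, writing $\X=[U/G]$ with $U$ affine and $G$ reductive, and $\Z=[V/G]$. Since $\Z$ is strong, the ideal $I_V\subseteq\O(U)$ is generated by its $G$-invariants (the algebraic content of saturation under a good moduli space morphism); combined with the regular embedding hypothesis, one can choose $G$-invariant functions $f_1,\dots,f_k\in\O(U)^G=\O(U/\!\!/G)$ forming a regular sequence that cuts out $\Z$ in $\X$, and these then cut out $Z=\pi(\Z)$ in $X$ as a regular embedding. The associated refined Gysin operation glues to an operational class in $A^k_{\op}(X)_\QQ$ pulling back to $[\Z]$.

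For (b), the inclusion $\pi^*A^1_{\op}(X)\subseteq A^1_{\st}(\X/X)$ follows by realizing an operational class as $c_1(L)$ for $L\in\Pic(X)$ via Theorem \ref{thm:pic-surjectivity}: the Cartier divisor of any section of $L$ pulls back to a scheme-theoretically saturated divisor on $\X$. For the reverse inclusion, let $\Y\subseteq\X$ be a strong integral codimension-one substack; smoothness of $\X$ makes it a Cartier divisor, and Alper's descent criterion (the good moduli analogue of Kempf's descent lemma) exhibits $\O_\X(\Y)$ as $\pi^*L$ for some $L\in\Pic(X)$, the relevant stabilizer characters being trivial because $\Y$ is saturated; one then has $[\Y]=\pi^*c_1(L)$. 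The ample line bundle hypothesis on $X$ is used to ensure this descent is integral.

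For (d), the case $k=\dim X$ is the observation that $A^{\dim X}_{\tst}(\X/X)_\QQ$ is spanned by classes of residual gerbes above closed points $x\in X$, and these are the $\pi^*$-images of the operational point classes dual to $x$. The case $k=\dim X-1$ is handled curve by curve: a topologically strong cycle of codimension $\dim X-1$ projects to a curve $C\subseteq X$, and an \'etale neighbourhood of $C$ in $X$ inherits an ample line bundle from proper stability, so part (b) applies locally to realize $[C]$ as an operational class. For the additional $\st=\tst$ assertion, representability of the maximal saturated tame substack forces the generic fiber of $\pi$ over each stratum to be scheme-theoretically reduced, collapsing the distinction in these two top codimensions. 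The main obstacle throughout is part (b), specifically the integral descent of $\O_\X(\Y)$: good moduli spaces need not be $\QQ$-factorial, so the descent must use the ample line bundle hypothesis to supply enough global sections to verify Alper's criterion without denominators.
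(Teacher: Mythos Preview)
Your argument for part (d) has a genuine gap. You claim that closed points $x\in X$ yield ``operational point classes dual to $x$'' in $A^{\dim X}_{\op}(X)$, but for singular $X$ no such class exists in general: there is no Poincar\'e duality. Similarly, your curve argument invokes part (b), which concerns codimension-one classes, not curves; this only makes sense when $\dim X=2$, and the phrase ``inherits an ample line bundle from proper stability'' has no content. The key observation you are missing is that a topologically strong integral substack of dimension $0$ or $1$ must lie entirely in the properly stable locus $\X^{ps}$ (this is Corollary \ref{cor:dim0or1}), whose image $X^{ps}$ has finite quotient singularities. On $X^{ps}$ one \emph{does} have refined intersection classes $c_Z\in A^*(Z\to X^{ps})_\QQ$, and because $Z\subseteq X$ is closed these lift canonically to $A^*_{\op}(Z\to X)$ by the formula $\alpha\mapsto Z\cdot(\alpha|_{X^{ps}\times_X T})$; pushing forward along $Z\to X$ gives the desired operational class on $X$ whose cap with $[\X]$ is a nonzero multiple of $[\Z]$. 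This is what makes (d) work, and it is what the paper does.

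There are smaller issues elsewhere. In (b) you invoke Theorem \ref{thm:pic-surjectivity}, which is a $\QQ$-statement, so it does not directly give the integral equality $\pi^*A^1_{\op}(X)=A^1_{\st}(\X/X)$; the paper instead uses the integral statement Theorem \ref{thm:same-images}. The ample hypothesis is not used for descent of $\O_\X(\Y)$ as you suggest, but rather to represent $c_1(\pi^*L)\cap[\X]$ by a \emph{strong} (not merely topologically strong) divisor via a Bertini-type choice of section (Lemma \ref{lemma.c1}). In (c), your \'etale-local construction of a Gysin class does not obviously glue to a global operational class, since operational Chow groups are not known to satisfy \'etale descent; the paper avoids this by citing that $Z\subseteq X$ is \emph{globally} regularly embedded \cite[Proposition 1.14]{EdSa:16} and then expressing $[\Z]=ch(\O_\Z)\cap[\X]=ch(\pi^*\O_Z)\cap[\X]$ via the Koszul resolution. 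Part (a) is essentially the paper's argument.
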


\begin{corollary} \label{cor:lowdim}
If $\dim X \leq 3$, the maximal saturated tame substack 
of $\X$ is representable, and $X$ has an ample line bundle, then the 
map $A^*_{\op}(X)_\QQ \to A^*_{\st}(\X/X)_\QQ$ is an isomorphism.
\end{corollary}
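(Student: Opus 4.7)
The plan is to assemble parts (\ref{surj::A1}) and (\ref{surj::dimX}) of Theorem \ref{thm:surj-pullback} codimension by codimension. By Theorem \ref{thm:inj-pullback}, the pullback $\pi^* \colon A^k_{\op}(X)_\QQ \to A^k(\X)_\QQ$ is already injective with image contained in $A^k_{\tst}(\X/X)_\QQ$, so for each $k$ it remains only to show that this image equals $A^k_{\st}(\X/X)_\QQ$. Since both $A^k_{\op}(X)_\QQ$ and $A^k_{\st}(\X/X)_\QQ$ vanish for $k > \dim X$ (the operational ring for dimension reasons, the strong relative Chow group as recorded in Section \ref{sec:intro}), I need only handle the range $0 \le k \le \dim X \le 3$.

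The case $k = 0$ is immediate: both groups equal $\QQ$ and $\pi^*$ sends $1$ to $[\X]$. For $k = 1$ I would invoke Theorem \ref{thm:surj-pullback}(\ref{surj::A1}), whose hypothesis is satisfied because $X$ is assumed to admit an ample line bundle; it gives $\pi^*(A^1_{\op}(X)) = A^1_{\st}(\X/X)$ and hence the desired isomorphism rationally in codimension one. For $k \in \{\dim X - 1,\dim X\}$ I would invoke Theorem \ref{thm:surj-pullback}(\ref{surj::dimX}), whose representability hypothesis on the maximal saturated tame substack of $\X$ is part of the assumptions of the corollary; that part simultaneously delivers an isomorphism $\pi^* \colon A^k_{\op}(X)_\QQ \simeq A^k_{\tst}(\X/X)_\QQ$ and the equality $A^k_{\st}(\X/X)_\QQ = A^k_{\tst}(\X/X)_\QQ$, and together these yield the desired isomorphism $A^k_{\op}(X)_\QQ \simeq A^k_{\st}(\X/X)_\QQ$ in these codimensions.

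The only combinatorial point to verify is that when $\dim X \le 3$, every integer $k$ with $1 \le k \le \dim X$ either equals $1$ or lies in $\{\dim X - 1,\dim X\}$; inspecting the three cases $\dim X = 1, 2, 3$ separately confirms this, so the cases above exhaust the relevant range. There is no substantive obstacle in this argument: all the actual work lives inside Theorem \ref{thm:surj-pullback}, and the hypothesis $\dim X \le 3$ is precisely what arranges for parts (\ref{surj::A1}) and (\ref{surj::dimX}) between them to cover every codimension.
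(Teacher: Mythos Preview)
Your proposal is correct and follows essentially the same approach as the paper: handle $k=0$ trivially, $k=1$ via Theorem \ref{thm:surj-pullback}(\ref{surj::A1}), and $k\in\{\dim X-1,\dim X\}$ via Theorem \ref{thm:surj-pullback}(\ref{surj::dimX}), observing that these exhaust $0\le k\le\dim X\le 3$. One small remark: the vanishing of $A^k_{\op}(X)_\QQ$ for $k>\dim X$ is not quite ``for dimension reasons'' on its own, but it follows immediately from the injectivity of $\pi^*$ (Theorem \ref{thm:inj-pullback}) together with the vanishing of $A^k_{\tst}(\X/X)_\QQ$ in that range, so your argument is complete as written.
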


\begin{remark}
\label{indep-of-stack}
 Theorem \ref{thm:surj-pullback}(\ref{surj::quot-sing}) implies that if the good moduli space
  $X$ is smooth or has finite quotient singularities, then $A^*_{\tst}(\X/X)_\QQ\subseteq A^*(\X)_\QQ$ is a subring, i.e.~it is closed under intersection products. Furthermore, it shows that $A^*_{\tst}(\X/X)_\QQ$ is independent of the stack $\X$. In other words,
  any two stacks $\X'$ and $\X$ with good moduli space $X$
  have rationally isomorphic topologically strong Chow groups. For
  Deligne-Mumford or tame stacks with coarse moduli space $X$ this is well
  known. Our result shows that even if the stack $\X$ is not
  tame, there is still a canonical subring $A^*_{\tst}(\X/X)_\QQ\subseteq A^*(\X)_\QQ$ which captures the rational
  Chow ring of the moduli space $X$. 
\end{remark}

\section{Background results}
\label{sec:background}
Throughout this paper, all schemes and stacks are assumed to be of finite type over
an algebraically closed
field. All algebraic stacks are assumed to have affine diagonal. An Artin stack is {\em tame} if it has finite inertia and
satisfies any (and hence all) of the equivalent conditions of \cite[Theorem 3.2]{AOV:08}. In characteristic 0, all Deligne-Mumford stacks with finite inertia are tame.

\subsection{Intersection theory on schemes and stacks}

Let $X$ be a scheme or algebraic space. We denote
by $A_n(X)$ the Chow group of $n$-dimensional cycles modulo rational equivalence and we denote by $A_*(X)$ the direct sum of all Chow groups.
Unless otherwise stated, we assume that a scheme is equidimensional and use the notation $A^k(X)$ to denote the Chow group of codimension-$k$ cycles modulo rational equivalence. We denote by $A^k_{\op}(X)$
the codimension-$k$ operational Chow group of $X$ as defined in \cite[Chapter 17]{Fulton}. By definition, an element $c \in A^k_{\op}(X)$ is an assignment,
for every morphism $T \to X$ a homomorphism $c_T \colon A_{*}(T) \to A_{*-k}(T)$
  which is compatible with the basic operations of Chow groups (flat and lci pullbacks, as well as proper pushforward). If $\alpha \in A_*(T)$ we write
  $c \cap \alpha$ for $c_T(\alpha)$. The group $\bigoplus A^k_{\op}(X)$ is a graded ring
  with multiplication given by composition. When $X$ is smooth, the Poincar\'e
  duality map
  $A^*_{\op}(X) \to A^*(X)$, $c \mapsto c \cap [X]$ is an isomorphism of rings
  \cite[Cor 17.4]{Fulton}
  where the product on $A^*(X)$ is the intersection product defined in \cite[Chapter 8]{Fulton}.

  There is also an intersection theory for stacks which was developed in \cite{EdGr:98} and \cite{Kre:99} building on earlier work of Gillet \cite{Gil:84} and Vistoli \cite{Vis:89}.  If $\X = [Z/G]$ is a quotient stack then we can identify
  the Chow group $A^k(\X)$ with the equivariant Chow group $A^k_G(X)$ defined in
  \cite{EdGr:98}.
  For any stack we can define an operational Chow ring in a manner similar to the definition for schemes. Precisely, an element $c \in A^k_{\op}(\X)$
  is an assignment to every morphism from a scheme  $T \to \X$ an operation
  $c_T \colon A^*(T) \to A^{*-k}(T)$ with the usual compatibilities.
  When $\X = [Z/G]$ is a quotient we can identify the operational Chow
  ring $A^*_{\op}(\X)$ with
  the operational equivariant Chow ring of $A^*_{\op, G}(Z)$ defined in \cite{EdGr:98}. Again if $\X$ is smooth, there is a Poincar\'e duality isomorphism
  $A^*_{\op}(\X) \to A^*(\X)$, $c \mapsto c \cap [\X]$ \cite{EdidinHandbook}.

  Let $\ix$ be an integral Artin stack which can be stratified by quotient
  stacks\footnote{Any stack with a good moduli space necessarily has affine
    stabilizers and so can be stratified by quotient stacks by \cite[Proposition 3.5.9]{Kre:99}} and let  $\pi \colon \ix \to X$ be
  any morphism to an algebraic space. The following proposition gives the
  construction of an evaluation map 
$A^*_{\op}(X) \to A^*(\ix)$, $c \mapsto c \cap [\ix]$.
\begin{proposition}
  There exists an evaluation map $A^*_{\op}(X) \to A^*(\ix)_\QQ$, $c \mapsto
  c \cap [\ix]_\QQ$. Moreover, if $\ix$ is a quotient stack or if the characteristic of the ground field is $0$
  then this map can defined with $\ZZ$-coefficients.
\end{proposition}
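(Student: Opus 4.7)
The plan is two-fold. First, construct $c\cap[\ix]$ in the case $\ix$ is a quotient stack by equivariant approximation; then bootstrap to a general stratified $\ix$ by devissage along the stratification, accepting $\QQ$-coefficients in general.

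In the quotient case $\ix=[V/G]$, I would apply the Edidin--Graham equivariant approximation: for each fixed codimension $k$, choose an open $U$ in a representation $W$ of $G$ on which $G$ acts freely with $\codim_W(W\setminus U)>k$. Then $Y_U=(V\times U)/G$ is an algebraic space, and the smooth-with-affine-fibre morphism $Y_U\to\ix$ induces an isomorphism $A^k(Y_U)\simeq A^k(\ix)$. The composite $Y_U\to\ix\to X$ is a morphism to an algebraic space, so $c$ acts on $A_*(Y_U)$, and I would define $c\cap[\ix]\in A^k(\ix)$ as the class corresponding to $c\cap[Y_U]$. Independence of the choice of $U$ follows from the flat-pullback compatibility of operational classes, applied to the common refinement $Y_{U\times U'}$. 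This construction uses $\ZZ$-coefficients and thus settles the quotient stack case.

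For general $\ix$ stratified by quotient substacks, I would induct on the number of strata. Let $\U\subseteq\ix$ be an open quotient substack with closed complement $\iota\colon\Z\hookrightarrow\ix$; by the quotient case we obtain $c\cap[\U]\in A^*(\U)$, and by induction $c\cap[\Z]\in A^*(\Z)_\QQ$. Kresch's right-exact localization sequence
\[
A_*(\Z)_\QQ\xrightarrow{\iota_*} A_*(\ix)_\QQ \longrightarrow A_*(\U)_\QQ \to 0
\]
produces a lift of $c\cap[\U]$ to $A_*(\ix)_\QQ$. The remaining ambiguity is a class pushed forward from $\Z$, which I would pin down by testing against a proper surjective morphism $p\colon W\to\ix$ from a scheme (whose existence for such stacks follows from Kresch's Chow lemma). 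The composite $\pi\circ p\colon W\to X$ lets $c$ act on $A_*(W)$, and the projection formula picks out the unique lift compatible with $p_*(c\cap[W])$. The degrees appearing along $p$, which can fail to be birational in positive characteristic, are the source of the $\QQ$-coefficients.

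The main obstacle is verifying that the resulting class does not depend on the stratification, on the envelope $p$, or on the equivariant approximations $U$. This reduces to a standard but delicate devissage: two constructions are compared on a common refinement, and the defining compatibilities of an operational class with flat pullback and proper pushforward force them to agree in $A^*(\ix)_\QQ$. In characteristic $0$, resolution of singularities for Artin stacks lets one choose $p$ birational, so no degree needs to be inverted and the construction lifts to $\ZZ$-coefficients, accounting for the second clause of the proposition.
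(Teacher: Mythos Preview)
Your treatment of the quotient-stack case via Edidin--Graham approximation is correct and matches the paper. The difficulty is in your passage to a general $\ix$ stratified by quotient stacks.

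The step that fails is the appeal to a proper surjective morphism $p\colon W\to\ix$ from a scheme. No such $W$ exists for Artin stacks with positive-dimensional stabilizers: already for $\ix=B\GG_m$ there is no scheme mapping properly and surjectively to $\ix$, since every point of $\ix$ has infinite automorphism group. Kresch's Chow lemma does not provide this; it gives projective morphisms from stacks that are \emph{global quotients} (or gerbes over schemes), not from schemes. Without $W$, there is nothing to ``test against,'' and your devissage leaves the ambiguity from $\Z$ unresolved. The same problem recurs in your characteristic-zero argument: resolving $\ix$ yields a smooth Artin stack, not a scheme, so you still cannot take $p$ from a scheme and birational.

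The paper sidesteps this entirely by working on the \emph{base} $X$ rather than on $\ix$. One applies Chow's lemma and an alteration to $X$ to obtain $g\colon X'\to X$ generically finite of degree $d$ with $X'$ smooth and quasi-projective. By Riemann--Roch on $X'$, the pullback $g^*c$ is a rational polynomial in Chern classes of vector bundles on $X'$. Since Kresch's theory provides Chern-class operations on $A_*$ of any Artin stack stratified by quotient stacks, one can evaluate this polynomial on $[\ix']$ (with $\ix'=X'\times_X\ix$) and then push forward and divide by $d$. In characteristic $0$ one takes $g$ birational and further resolves $\ix'$ to a smooth stack $\tilde{\ix}'$; the map $\tilde{\ix}'\to X'$ is then lci (smooth source, smooth target), and Kresch's lci pullback gives the definition over $\ZZ$. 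The key point you are missing is that the operational class is converted into Chern-class data \emph{on the algebraic space $X'$}, where alterations and resolutions are unproblematic, and only then transported to the stack.
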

\begin{proof}
  For general $\ix \to X$, we can apply Chow's lemma and deJong's alteration theorem to construct a generically finite morphism of degree $d$,
  $g \colon X' \to X$ where $X'$ is smooth and has an ample line bundle.
 
  Since $X'$ is smooth and has an ample line bundle, we can, by the Riemann-Roch theorem, express the image of $g^*c$ in $A^*(\X')_\QQ$  as $p(E_1, \ldots , E_r)$ where $p$ is a polynomial in the Chern classes of vector bundles $E_1, \ldots E_r$ on $X'$.

  Let $\ix' = X' \times_X \ix$ be the stack obtained by base change, so we have a catersian diagram.
 \[
  \xymatrix{
  \X'\ar[r]^f\ar[d]^-{\pi'} & \X\ar[d]^-{\pi}\\
  X'\ar[r]^g & X
  }
  \]
  By \cite[Theorem 2.1.12(vii)]{Kre:99} Chern class operations
  are defined on the Chow groups of any Artin stack. Thus we define $c \cap [\ix]$ by the formula
  $${1\over{d}}f_* \left(p((\pi')^*E_1, \ldots , (\pi')^*E_r)) \cap [\ix']\right).$$
  Standard arguments using the projection formula show that this is independent of choice alteration $X'$.
  
  If we work over a field of characteristic 0 then using resolution of
  singularities we can assume that $X' \to X$ is birational. Let
  $h \colon \tilde{\ix'} \to \ix'$ be a resolution of
  singularities\footnote{The existence of resolution of singularities
    for Artin stacks follows from functorial resolution of
    singularities for schemes. Since resolutions are functorial for
    smooth morphisms they can be constructed for Artin stacks which
    are locally schemes in the smooth topology. The functoriality also
    implies that the resolution of singularities morphism is
    representable.} of the  stack $\ix'$ obtained by base change along the
  morphism $g \colon X' \to X$. Let $p = \pi' \circ h$ and
  $\tilde{f} = f \circ h$. Then we have a commutative diagram:

   \[
  \xymatrix{
  \tilde{\X'}\ar[r]^{\tilde{f}}\ar[d]^-{p} & \X\ar[d]^-{\pi}\\
  X'\ar[r]^g & X
  }
  \]
  Both $\tilde{\ix'}$ and $X'$ are smooth so the morphism $p$ is l.c.i.
  Also the stack
  $\tilde{\ix'}$ can be stratified by quotient stacks
  since it admits a representable morphism to the stack $\ix$ which has the same property.

  By \cite[Theorem 2.1.12(xi)]{Kre:99} there is an
  l.c.i. pullback $p^* \colon A^*(X') = A^*_{\op}(X') \to
  A^*(\tilde{\ix'})$.  We define $$c \cap [\ix] =
  \tilde{f}_*\left(p^*g^*c \cap [\tilde{\ix'}]\right).$$ Again this
  formula is independent of choices of resolutions.
  
  Finally if $c \in A^k_{\op}(X)$ and $\ix = [Z/G]$ is a quotient stack, then 
  $A^k(\ix)$ is identified with
  $A^k((Z \times U)/G)$ for some open set $U$ in a representation of $G$.
  In this case we identify $c \cap [\ix]$ with
  $c \cap [(Z \times U)/G] \in A^k((Z \times U)/G)$.
\end{proof}

  \subsection{Properly stable good moduli spaces and Reichstein transforms}
  We briefly review the material on properly stable good moduli spaces and Reichstein transforms from \cite{EdRy:17, EdSa:16}. 

  \begin{definition}[{\cite[Definition 4.1]{Alp:13}}]
Let $\ix$ be an algebraic stack and let $X$ be an algebraic space. We say
that $X$ is a {\em good moduli space of $\ix$} if there is a morphism
$\pi \colon \ix \to X$ such that
\begin{enumerate}
\item $\pi$ is {\em cohomologically affine} meaning that the pushforward functor $\pi_*$
on the category of quasi-coherent ${\mathcal O}_\ix$-modules is exact.

\item $\pi$ is {\em Stein} meaning that the natural map ${\mathcal O}_X \to \pi_* {\mathcal O}_\ix$ is an isomorphism.
\end{enumerate}
More generally, a morphism $\pi \colon \ix \to \Y$ of algebraic stacks is a {\em good moduli space morphism} if it satisfies conditions (1) and (2) above.
\end{definition}
\begin{remark} By \cite[Theorem 6.6]{Alp:13}, a good moduli space morphism
$\pi\colon\ix\to X$ is the universal morphism from $\ix$ to an algebraic space. That is, if $X'$ is an algebraic space then any morphism $\ix \to X'$ factors through a morphism $X \to X'$. Consequently $X$ is unique up to unique isomorphism, so we will refer to $X$ as \emph{the} good moduli space of $\ix$.
\end{remark}

\begin{remark}
If $\ix = [Z/G]$ where $G$ is a linearly reductive algebraic group
then the statement that $X$ is a good moduli space for $\ix$ is 
equivalent to the
statement that $X$ is the good quotient of $Z$ by $G$. 
\end{remark}

\begin{definition}[{\cite{EdRy:17}}] \label{def.stablegms}
Let $\ix$ be an Artin with good moduli space $X$ and
let $\pi \colon \ix \to X$ be the good moduli space morphism. We say that a closed point
of  $\ix$ is
{\em
   stable} if $\pi^{-1}(\pi(x)) = x$ under the induced map of
  topological spaces $|\X| \to |X|$. A point $x$ of $\X$ is {\em
    properly stable} if it is stable and the stabilizer of $x$ is finite.

We say $\ix$ is  stable (resp.~properly stable) if there is a good moduli
space $\pi \colon \ix\to X$ and the set of stable (resp.~properly stable) points is non-empty. Likewise we say that $\pi$ is a stable (resp.~properly stable) good moduli space morphism.
\end{definition}
\begin{remark} \label{remark:ps}
This definition is modeled on GIT. If $G$ is a linearly reductive group
and $X^{ss}$ is the set of semistable points for a linearization of the 
action of $G$ on a projective variety $X$ then a (properly) stable point
of $[X^{ss}/G]$ corresponds to a (properly) stable orbit in the sense of GIT. 
The stack $[X^{ss}/G]$ is stable if and only if $X^{s} \neq \emptyset$. Likewise
$[X^{ss}/G]$ is properly stable if and only if $X^{ps} \neq \emptyset$. As is the case for GIT quotients, the set of stable (resp. properly stable points)
is open \cite{EdRy:17}.

We denote by $\ix^s$ (resp.~$\ix^{ps}$) the open substack of $\ix$ consisting of
stable (resp.~properly stable) points. The stack $\ix^{ps}$ is the
maximal tame substack of $\ix$ which is saturated with
respect to the good moduli space morphism $\pi\colon\ix \to X$.  In
particular, a stack $\ix$ with good moduli space $X$ is properly
stable if and only if it contains a non-empty {\em saturated}
tame open substack.
\end{remark}

The following definition is a straightforward extension of
the one  originally made in \cite{EdMo:12}. 
\begin{definition} \label{def.reichstein}
  Let $\pi \colon \X \to X$ be a good moduli space morphism and let
  $\iC \subseteq \X$ be a closed substack. Let $f \colon\widetilde{\X}\to \X$
  be the blow-up along $\iC$. 
  The {\em Reichstein transform} with center $\iC$, is the stack $R(\X, \iC)$
  obtained by deleting the strict transform of the saturation
  $\pi^{-1}(\pi(\iC))$ in the blow-up of $\ix$ along $\iC$.
\end{definition}
\begin{theorem}\label{thm.reichstein}
  \cite[Proposition 3.4, Proposition 4.5]{EdRy:17}
  Let $\pi \colon \ix \to X$ be a good moduli space morphism with $\ix$ smooth
  and let
  $\iC \subseteq \X$ be a smooth closed substack with sheaf of ideals
  ${\mathcal I}$. Then $R(\X, \iC) \to \Proj(\bigoplus \pi_*({\mathcal I}^n))$ is
  a good moduli space morphism.
\end{theorem}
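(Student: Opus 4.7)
The plan is to construct the morphism $R(\X,\iC) \to Y := \Proj_X\bigl(\bigoplus_{n\ge 0}\pi_*\mathcal{I}^n\bigr)$ as the restriction of a natural rational map out of the blow-up $f\colon\widetilde{\X} = \Proj_\X\bigl(\bigoplus_{n\ge 0}\mathcal{I}^n\bigr) \to \X$, and then to verify the two axioms of a good moduli space morphism.

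First I would set up the target: since $\pi$ is cohomologically affine its pushforward is exact, so $\bigoplus_n\pi_*\mathcal{I}^n$ is a graded $\mathcal{O}_X$-algebra of finite type (using also that $\iC\hookrightarrow\X$ is a regular embedding in a smooth stack), and hence $Y$ is well defined. To set up the rational map I would use the adjunction counits $\pi^*\pi_*\mathcal{I}^n \to \mathcal{I}^n$, which assemble into a morphism of graded $\mathcal{O}_\X$-algebras from $\pi^*\bigl(\bigoplus\pi_*\mathcal{I}^n\bigr)$ into the Rees algebra $\bigoplus \mathcal{I}^n$. This induces a rational map $\widetilde{\X} \dashrightarrow \X\times_X Y$ whose locus of definition is the largest open substack on which the image algebra generates some $\mathcal{O}_{\widetilde{\X}}(n)$. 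Composing with the projection to $Y$ gives the desired rational map $\widetilde{\X}\dashrightarrow Y$.

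Next I would identify this locus of definition with $R(\X,\iC)$ by an \'etale-local computation. Working \'etale-locally on $X$ via the local structure theorem of Alper--Hall--Rydh, one reduces to $\pi\colon[\spec A/G]\to\spec A^G$ with $G$ linearly reductive and $\iC=[V(I)/G]$; the blow-up becomes $\Proj_A\bigl(\bigoplus I^n\bigr)$, $Y$ becomes $\Proj_{A^G}\bigl(\bigoplus (I^n)^G\bigr)$, and the saturation $\pi^{-1}(\pi(\iC))$ is cut out by $I^{\sat}:=I^G\cdot A$. Reductivity of $G$ (via the Reynolds operator) forces the map $(I^n)^G\otimes_{A^G}A \to I^n$ to surject onto the Rees subalgebra generated by $I^{\sat}$. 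Consequently the image algebra fails to generate $\mathcal{O}(n)$ precisely where the pulled-back ideal $I^{\sat}$ vanishes on the blow-up, that is, along the strict transform of $V(I^{\sat})$. Removing this strict transform thus yields an honest morphism to $Y$ defined on all of $R(\X,\iC)$.

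Finally, to verify the good moduli space axioms for $R(\X,\iC)\to Y$, I would cover $Y$ by basic affines $\spec\bigl((I^n)^G\bigr)_{(s)}$ indexed by homogeneous invariants $s$. The preimage of such an affine in $R(\X,\iC)$ is the quotient stack attached to the principal open $D_+(s)$ in the $G$-scheme $\Proj_A(\bigoplus I^n)$, so cohomological affineness follows from exactness of $G$-invariants and the Stein property follows from identifying the ring of invariants with the degree-zero part of the localized invariant Rees algebra. The hardest step is the middle paragraph: one must verify that the indeterminacy locus equals the strict transform of $\pi^{-1}(\pi(\iC))$ scheme-theoretically, not merely set-theoretically, and then glue the local \'etale identifications into a global morphism on $R(\X,\iC)$. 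This is where the smoothness hypotheses on $\iC$ and $\X$ are genuinely used, through the control they afford over the Rees algebra and its invariants in large degree.
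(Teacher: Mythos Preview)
The paper does not give its own proof of this statement: it is quoted verbatim as a background result from \cite[Proposition 3.4, Proposition 4.5]{EdRy:17}, with no argument supplied. So there is nothing in the present paper to compare your proposal against.

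That said, your outline is a plausible reconstruction of how such a result is established, and is in the spirit of the arguments in the cited reference: reduce via the local structure theorem to $[\spec A/G]\to\spec A^G$, identify the Reichstein transform with the complement of the strict transform of $V(I^G A)$ inside $\Bl_I\spec A$, and then check cohomological affineness and the Stein condition on the standard affine opens of $\Proj\bigl(\bigoplus (I^n)^G\bigr)$ using exactness of $G$-invariants. One point in your middle paragraph deserves care: the image of $(I^n)^G\otimes_{A^G}A\to I^n$ is $(I^n)^G\cdot A$, which is in general strictly larger than $(I^G A)^n$, so ``surjects onto the Rees subalgebra generated by $I^{\sat}$'' is not quite the right formulation. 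What you actually need is that on the blow-up the ideal generated by the pulled-back invariants cuts out exactly the strict transform of $V(I^G A)$; this is what \cite{EdRy:17} proves, and it uses the smoothness of $\iC$ and $\X$ (so that $I$ is a regular ideal and the blow-up is well behaved) together with linear reductivity. Your closing remarks correctly flag this as the delicate step.
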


Let $\pi \colon \X \to X$ be a stable good moduli space morphism with
$\X$ smooth and irreducible. Let $\Y \subseteq \X$ be the locus of
points with maximal dimensional stabilizer $n$. By \cite{EdRy:17}, $\Y$
is a closed smooth substack of $\X$. Let $f \colon \X' \to \X$ be the
Reichstein transform along $\Y$ and let $g \colon X' \to X$ be the
induced morphism of good moduli spaces.  If $\Y$ is a proper closed
substack then by \cite[Proposition 5.3]{EdRy:17}, $g$ is a projective
birational morphism and the stabilizer of every point of $\X'$ is
strictly less than $n$. As a consequence, after a finite sequence of
Reichstein transforms we may obtain a stack $\X''$ where the dimension of the
stabilizer is constant. If $\X$ is properly stable then $\X''$
is a tame stack; otherwise $\X''$ is a gerbe over a tame stack.

\section{Strong and topologically strong Chow groups}
We begin with the two central definitions of the paper, the first of
which having previously been introduced in \cite{EdSa:16}. Note first
that if $\pi \colon \X \to X$ is a good moduli space morphism and
$\Z\subseteq\X$ is a closed substack, then $\pi(\Z)\subseteq X$
inherits a natural subscheme structure. Indeed, if $\iI\subseteq\O_\X$
is the coherent sheaf of ideals defining $\Z$, then since $\pi_*$ is
Stein and cohomologically affine, we see
$\pi_*\iI\subseteq\pi_*\O_\X=\O_X$ is a coherent sheaf of ideals and
set-theoretically cuts out $\pi(\Z)$, thereby giving $\pi(\Z)$ a
natural scheme structure.

\begin{definition}
\label{def:strong}
Let $\X$ be an irreducible Artin stack with stable good moduli space $\pi \colon \X \to X$. A
closed integral substack $\Z \subseteq \X$ is {\em strong} if $\codim_\X\Z
=\codim_X\pi(\Z)$ and $\Z$ is saturated with respect to $\pi$, i.e.~$\pi^{-1}(\pi(\Z)) = \Z$ as stacks. We say $\Z$ is {\em topologically strong} if
$\codim_\X\Z=\codim_X\pi(\Z)$ and $\pi^{-1}(\pi(\Z))_{red} = \Z$.
\end{definition}

\begin{remark} Note that if $\X$ is tame then $\X$ has a coarse moduli space
  $X$ 
  which is also the good moduli space \cite[Example 8.1]{Alp:13}.
  If $\X \to X$ is the coarse/good moduli space morphism, then any integral substack $\Z \subseteq \X$ is topologically strong. If $\X \to X$ is properly stable then $\dim \X = \dim X$, so
  $\dim \Z = \dim \pi(Z)$ if $\Z$ is strong or topologically strong.
  \end{remark}

\begin{lemma}
\label{l:strong-etale-loc}
Let $\pi\colon\X\to X$ be a stable good moduli space morphism and $\Z\subseteq\X$ an irreducible closed substack. Consider the cartesian diagram
\[
\xymatrix{
\X'\ar[r]^f\ar[d]^-{\pi'} & \X\ar[d]^\pi\\
X'\ar[r]^g & X
}
\]
with $g$ an \'etale cover. Let $\Z'=f^{-1}(\Z)$ and $Z'=\pi'(\Z')$. Then $\Z$ is strong (resp.~topologically strong) with respect to $\pi$ if and only if $\Z'$ is reduced, $\codim_{\X'}\Z'=\codim_{X'} Z'$, and $\Z'={\pi'}^{-1}(Z')$ (resp.~$\Z'=({\pi'}^{-1}(Z'))_{red}$).
\end{lemma}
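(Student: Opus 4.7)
The plan is to leverage two standard facts about good moduli space morphisms under \'etale base change: $\pi'$ is again a good moduli space morphism, and $\pi_*$ commutes with pullback along $g$. Concretely, since $\pi$ is cohomologically affine and $g$ is flat, one has $g^*\pi_*\iI_\Z=\pi'_*f^*\iI_\Z$, and since $f$ is flat we have $f^*\iI_\Z=\iI_{\Z'}$. Hence the scheme-theoretic image $Z'=\pi'(\Z')$ is defined by the ideal $g^*\pi_*\iI_\Z$, giving the scheme-theoretic equality $Z'=g^{-1}(\pi(\Z))$.

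Given this compatibility, reducedness and codimension are easy. An \'etale morphism preserves and reflects reducedness of closed subschemes (being flat with geometrically reduced fibers), and $\Z$ is integral, so $\Z'=f^{-1}(\Z)$ is automatically reduced. The morphisms $f$ and $g$ also preserve codimension, so $\codim_{\X'}\Z'=\codim_\X\Z$ and $\codim_{X'}Z'=\codim_X\pi(\Z)$; hence the two codimensions agree over $X'$ if and only if they agree over $X$.

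The saturation condition reduces to the identity
\[
{\pi'}^{-1}(Z')={\pi'}^{-1}(g^{-1}(\pi(\Z)))=f^{-1}(\pi^{-1}(\pi(\Z)))
\]
of closed subschemes of $\X'$, which follows from the cartesian square and the scheme-theoretic identification $Z'=g^{-1}(\pi(\Z))$. Since $f$ is faithfully flat, $\Z'={\pi'}^{-1}(Z')$ holds if and only if $f^{-1}(\Z)=f^{-1}(\pi^{-1}(\pi(\Z)))$ if and only if $\Z=\pi^{-1}(\pi(\Z))$, settling the strong case. The topologically strong case follows identically after taking reduced structures, using that $(-)_{\mathrm{red}}$ commutes with \'etale pullback.

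The principal step to pin down carefully is the first paragraph: that the scheme structure $\pi'(\Z')$ inherits as the image under the good moduli space morphism $\pi'$ really agrees with the pullback scheme structure $g^{-1}(\pi(\Z))$. This rests on flat base change for the pushforward of the cohomologically affine morphism $\pi$, which is available precisely because $\pi_*$ is exact and commutes with the flat pullback $g^*$. Once that identification is in hand, the remainder is routine faithfully flat descent along $f$.
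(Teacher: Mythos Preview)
Your proof is correct and follows essentially the same approach as the paper's: establish $Z'=g^{-1}(\pi(\Z))$, then transfer reducedness, codimension, and the saturation condition along the \'etale cover $f$. The one notable difference is your justification of the scheme-theoretic equality $Z'=g^{-1}(\pi(\Z))$: you argue directly via flat base change on ideal sheaves ($g^*\pi_*\iI_\Z=\pi'_*\iI_{\Z'}$), whereas the paper reaches the same conclusion by a cartesian-diagram chase showing $\Z'\to g^{-1}(\pi(\Z))$ is surjective. Your sheaf-theoretic argument is slightly cleaner here, since surjectivity alone only gives a set-theoretic identification and one still needs (implicitly) the base-change property of good moduli spaces to get the scheme structure right; otherwise the two proofs coincide.
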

\begin{proof}
Notice that $\Z$ integral if and only if $\Z$ is reduced, and since $f$ is an \'etale cover, this holds if and only if $\Z'$ reduced.

Next, we show that $Z'=g^{-1}(\pi(\Z))$. The diagram in the statement of the lemma is cartesian, and so
\[
\xymatrix{
\Z'\ar[r]\ar[d] & \Z\ar[d]\\
X'\ar[r]^-{g} & X
}
\]
is as well. This factors into a commutative diagram
\[
\xymatrix{
\Z'\ar[r]\ar[d] & \Z\ar[d]\\
g^{-1}(\pi(\Z))\ar[r]\ar[d] & \pi(\Z)\ar[d]\\
X'\ar[r]^-{g} & X
}
\]
and the bottom square is cartesian, so the top square is as well. Now $\Z\to\pi(\Z)$ is surjective, so $\Z'\to g^{-1}(\pi(\Z))$ is as well, showing $Z'=g^{-1}(\pi(\Z))$.

Since $f$ and $g$ are \'etale, $\dim\Z=\dim\Z'$ and $\dim\pi(\Z)=\dim g^{-1}\pi(\Z)=\dim Z'$. Therefore, $\codim_\X\Z=\codim_X\pi(\Z)$ if and only if
$\codim_{\X'}\Z'=\codim_{X'} Z'$.

Lastly, since $f$ is an \'etale cover, the canonical map $\Z\to\pi^{-1}\pi(\Z)$ is an isomorphism if and only if $\Z'\to f^{-1}\pi^{-1}\pi(\Z)=\pi'^{-1}Z'$ is an isomorphism. 
Similarly, we show $\Z=(\pi^{-1}\pi(\Z))_{red}$ if and only if $\Z'=({\pi'}^{-1}(Z'))_{red}$ under the presence of the equivalent hypotheses: $\Z$ is integral and $\Z'$ is reduced. To see this, note that the canonical map $\Z\to(\pi^{-1}\pi(\Z))_{red}$ is an isomorphism if and only if $\Z'\to f^{-1}((\pi^{-1}\pi(\Z))_{red})=(f^{-1}\pi^{-1}\pi(\Z))_{red}=(\pi'^{-1}Z')_{red}$ is an isomorphism.
\end{proof}

\begin{remark}[Local structure of strong and topologically strong substacks]
\label{rmk:local-structure-strong}
Let $\pi\colon\X\to X$ be a good moduli space morphism. By the local structure theorem of \cite{AHR:15}, for any point $x$ of $\X$, letting $G_x$ denote the stabilizer of $x$, there is an \'etale neighborhood of $x$ isomorphic to $[U/G_x]$ such that the diagram
$$\xymatrix{[U/G_x] \ar[r] \ar[d] & \X \ar[d]\\ U/G_x \ar[r] & X}$$
is cartesian and the horizontal maps are \'etale. Shrinking $U$ is necessary, we may assume it is affine.

By Lemma \ref{l:strong-etale-loc}, we can check if $\Z\subseteq\X$ is strong (resp.~topologically strong) \'etale locally. Thus, we need only understand the structure of those $\Z\subseteq\X=[U/G]$ with $U=\spec A$, where $\Z$ satisfies the conclusion of the lemma. 
Let $\Z$ be a reduced substack defined by an ideal $I\subseteq A$, let $\pi\colon\X\to U/G$ be the good moduli space map, and let $Z=\pi(\Z)$. The condition
that $\codim_\X \Z=\codim_\X Z$ is equivalent to $\hgt_A I=\hgt_{A^G} I^G =
\codim_\X \Z$. We have $\Z=\pi^{-1}(Z)$ if and only if $I^GA = I$, or equivalently $I=(f_1, \ldots , f_r)$ for some $f_i \in A^G$. Lastly, $\Z=(\pi^{-1}(Z))_{red}$ if and only if there is an auxiliary ideal $J = (f_1, \ldots , f_r)$ with $f_i \in A^G$ and $\sqrt{J} = I$.
\end{remark}

Using the notions of strong and topologically strong substacks, we obtain corresponding subgroups of Chow. The relative strong Chow groups were introduced in \cite{EdSa:16}.

\begin{definition}
  Let $\X \to X$ be a stable good moduli space morphism and assume
that $\X$ has pure dimension. Define the relative
  strong Chow group $A^k_{\st}(\X/X)$ to be the subgroup of $A^k(\X)$
  generated by the fundamental classes of strong integral substacks
  of codimension $k$. Likewise let $A^k_{\tst}(\X/X)$ be the subgroup generated by topologically strong integral substacks of codimension $k$. 
\end{definition}

\begin{remark}
  Unlike the Chow groups $A^k(\X)$, we see from the definition that both $A^k_{\st}(\X/X)$ and $A^k_{\tst}(\X/X)$ vanish 
  for $k > \dim \X$. Thus, the relative strong Chow group reflects some of the geometry of the good moduli space $X$.
  Moreover, if $\X$ is tame we also know that $A^k(\X)_\QQ$ is generated by fundamental classes of codimension-$k$ integral substacks of $\X$, as opposed to integral substacks on vector bundles over $\X$. Hence, $A^k_{\tst}(\X/X)_\QQ = A^k(\X)_\QQ \simeq A^k(X)_\QQ$.

However, the following example shows that the $A^k_{\st}(\X/X)$ need not equal
$A^k_{\tst}(\X/X)$ even after tensoring with $\QQ$.

\begin{example}[$A^k_{\st}(\X/X)_\QQ$ and $A^k_{\tst}(\X/X)_\QQ$ can differ]
\label{ex:tst-st-differ}
Consider $\GG_m$ acting on $\AA^4$ with weights $(1,-1,1,-1)$ and denote
by $x_1, x_2, x_3, x_4$ the coordinate functions on $\AA^4$. Let $\X = [\AA^4/\GG_m]$. Then the good moduli space is
$$X = \spec k[x_1x_2,x_1x_4, x_2x_3, x_3x_4].$$
If $\chi$ is the defining character of $\GG_m$ then the Chow group 
$A^1(\X) = \ZZ[t]$ where $t = c_1(\chi)$. If $\D \subseteq \X$ is an integral divisor, then since $\X$ is smooth, $\D$ is Cartier, hence of the form $[V(f)/\GG_m]$. Then by Remark \ref{rmk:local-structure-strong}, $\D$ is strong if and only if $f$ 
is a $\GG_m$-fixed polynomial. But such a polynomial has $\GG_m$-weight $0$ so its Chow class is
$0$. Hence $A^1_{\st}(\X/X) = 0$. On the other hand, the substack $\D'=[V(g)/\GG_m]$ with $g= x_1^2x_2 + x_3^2x_4$ is topologically strong and has non-trivial Chow class. To see that $\D'$ is topologically strong, note that if $I = (g)$ then $I^{\GG_m}k[x_1, x_2, x_3, x_4] = (x_4g, x_2g)$ and $V(x_4 g, x_2 g)$ is supported on $V(g)$ with an embedded component $V(x_2^2,x_4)$. Since the $\GG_m$-weight of $g$ is 1, we see $[\D']=t\neq0$. Hence $A^1_{\tst}(\X/X) = \ZZ t = A^1(\X)$.
\end{example}
\end{remark}

\subsection{Strong cycles and stratification by stabilizer}
Let $\X$ be a smooth connected Artin stack with good moduli space $\pi \colon\X \to X$. We show that $\pi$ induces a stratification of $X$ where each open stratum has finite quotient singularities. Let $\X_0$ be the locus of maximal dimensional stabilizer in $\X$ and let $X_0 \subseteq X$ be its image under $\pi$. By \cite{EdRy:17}, $\X_0$ is closed in $\X$ and thus $X_0 \subseteq X$ is as well. By \cite{Alp:13}, the restriction $\pi|_{\X_0}\colon\X_0 \to X_0$ is a good moduli space morphism. Now $\X_0$ is a smooth stack where the stabilizer dimension is constant.  By \cite[Proposition A.2]{EdRy:17} its rigidification is a smooth tame stack $\X^{rig}_0$ and the good moduli space morphism factors as $\X_0 \to \X_0^{rig} \stackrel{\eta}{\to} X_0$, where  
$\eta$ is a coarse space map. Hence $X_0$ has finite quotient singularities. Next, let $\Y_1$ be the locus of maximal dimensional stabilizer in $\X \smallsetminus \pi^{-1}(X_0)$. Again $\Y_1$ is a gerbe over a tame stack so its image $Y_1\subseteq X$ has finite quotient singularities. We let $\X_1=\overline{\Y}_1\cup\X_0$ which is closed, and let $X_1=\pi(\X_1)\subseteq X$ which is then closed as well. By upper semicontinuity of the dimension of stabilizers, we see $\overline{\Y}_1\setminus\Y_1\subseteq\X_0$, and so $\X_1\setminus\pi^{-1}(X_0)=\overline{\Y}_1\setminus\pi^{-1}(X_0)=\Y_1$. As a result, $X_1\setminus X_0=\pi(\X_1\setminus\pi^{-1}(X_0))=\pi(\Y_1)=Y_1$, which has finite quotient singularities. Continuing this process inductively we obtain a stratification $X_0 \subseteq X_1\subseteq \ldots \subseteq X_n = X$ where $X_{k+1}\smallsetminus X_k$ has finite quotient singularities. Note that $X \smallsetminus X_{n-1}$ is the image of $\ix^s$
under the good moduli space morphism.

The next proposition shows that topologically strong substacks of $\X$ must satisfy a transversality condition with respect to the above stratification.

\begin{proposition} \label{prop.stratify}

  Let $\pi\colon \X \to X$ be a properly
  stable good moduli space morphism
  and let $\Z$ be a topologically strong $k$-dimensional integral substack
  with image $Z\subseteq X$.
  Then $Z$ satisfies the following transversality conditions with respect to
  the stratification above.
  \begin{enumerate}
  \item\label{item::int-DM-locus} $Z \cap (X \smallsetminus X_{n-1}) \neq \emptyset$
  \item\label{item::int-lower-loci} For $k \leq n-1$,
$\dim \pi^{-1}(Z \cap Y) < \dim Z$ for every connected component $Y$ of $X_k$.
  \end{enumerate}
Conversely, if $Z \subseteq X$ is an integral closed subspace such
that $Z \cap (X \smallsetminus X_{n-1}) \neq \emptyset$, 
then let $\Z$ be the closure of $\pi^{-1}(Z \cap (X \smallsetminus X_{n-1}))$. If $\Z=\pi^{-1}(Z)_{red}$ then $\Z$ is topologically strong.
  \end{proposition}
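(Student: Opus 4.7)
The plan is to prove both directions by exploiting one common ingredient: the identity $\dim\X=\dim X$. This holds because $\X^{ps}$ is a dense open tame saturated substack of the connected stack $\X$, so its coarse moduli map $\X^{ps}\to X\setminus X_{n-1}$ is dimension-preserving, and density forces $\dim\X=\dim\X^{ps}=\dim X$. Combined with the codimension equality defining topologically-strong, this identity yields $\dim\Z=\dim Z$ for any topologically strong integral substack.

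For condition~(1), I argue by contradiction. Suppose $Z\subseteq X_{n-1}$, so that $\Z\subseteq\pi^{-1}(X_{n-1})=\X\setminus\X^{ps}$. Using the slice theorem of \cite{AHR:15}, present $\X$ \'etale-locally as $[U/H]$ around a closed point of $\Z$, with $U$ smooth affine and $H$ reductive positive-dimensional, and $\Z=[V/H]$ for a reduced $H$-invariant closed $V\subseteq U$. By Remark~\ref{rmk:local-structure-strong}, topologically-strongness forces $V=\sqrt{(I_V)^H\cdot A}$, which in turn forces $V$ to contain the entire reduced scheme-theoretic fiber $\pi^{-1}(\bar x)_{red}$---the null cone of the unique closed orbit over $\bar x$---for every $\bar x\in V/\!/H$. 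The codimension equality forces the generic $H$-stabilizer on $V$ to be finite, so these null cones are proper subsets of the slice. The crucial step is then to invoke the Hesselink--Kempf--Ness stratification for null cones of properly stable linear reductive actions to show that they decompose into multiple irreducible components; this reducibility contradicts integrality of $V$, and hence of $\Z$. This null-cone reducibility analysis is the main obstacle in the proof.

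Condition~(2) then reduces to a dimension count. Since $Y\subseteq X_k\subseteq X_{n-1}$ and (1) gives $Z\cap(X\setminus X_{n-1})\neq\emptyset$, we have $Z\not\subseteq Y$, so $\Z\not\subseteq\pi^{-1}(Y)$. Integrality of $\Z$ makes $\Z\cap\pi^{-1}(Y)$ a proper closed substack of $\Z$, so $\dim(\Z\cap\pi^{-1}(Y))<\dim\Z=\dim Z$. Topologically-strongness gives $\pi^{-1}(Z)_{red}=\Z$, so the closed substack $\pi^{-1}(Z\cap Y)=\pi^{-1}(Z)\cap\pi^{-1}(Y)$ has the same underlying topological space as $\Z\cap\pi^{-1}(Y)$, and the desired inequality transfers.

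For the converse, I set $\U=\X^{ps}$ with coarse/good moduli space $U=X\setminus X_{n-1}$. Since $Z\cap U$ is a non-empty open subset of the integral $Z$, it is irreducible. \'Etale-locally writing $\U=[W/G_0]$ with $G_0$ finite, the irreducible components of the preimage of $Z\cap U$ in $W$ are permuted transitively by $G_0$ (as their common quotient is the irreducible $Z\cap U$), so $\pi^{-1}(Z\cap U)$ is irreducible, with $\dim\pi^{-1}(Z\cap U)=\dim(Z\cap U)=\dim Z$ by dimension-preservation of coarse moduli maps of tame stacks. Its closure $\Z$ in $\X$ is therefore integral of dimension $\dim Z$; combined with $\dim\X=\dim X$ this yields $\codim_\X\Z=\codim_X Z$. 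Together with the hypothesis $\Z=\pi^{-1}(Z)_{red}$, all three requirements of being topologically strong are met.
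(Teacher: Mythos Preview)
Your argument for condition~(2) is essentially the same as the paper's, and your converse argument is correct in spirit, though more complicated than necessary: the paper simply observes that $\pi$ restricts to a homeomorphism $|\X^{ps}|\to|X\setminus X_{n-1}|$ (a proper bijection on points, being the coarse moduli map of a tame stack), so $\pi^{-1}(Z\cap(X\setminus X_{n-1}))$ is immediately irreducible of dimension $\dim Z$. Your \'etale-local component-permutation argument only establishes local irreducibility, which does not directly give global irreducibility without an additional connectedness argument.

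The real problem is your proof of condition~(1). You have taken a far more difficult route than needed, and the key step---that the null cone in a properly stable linear reductive representation always decomposes into multiple irreducible components via the Hesselink--Kempf--Ness stratification---is asserted without proof and is not obviously true; the HKN strata can coalesce into a single irreducible component. The paper's argument avoids this entirely with a one-line dimension count: if $\Z\cap\X^s=\emptyset$ then, since $\Z=\pi^{-1}(Z)_{red}$, every fiber $\pi^{-1}(z)_{red}$ for $z\in Z$ lies outside $\X^s$, hence is positive-dimensional by the very definition of a stable point. This forces $\dim\Z>\dim Z$, contradicting the equality $\dim\Z=\dim Z$ that you yourself derived from $\dim\X=\dim X$ and the codimension condition. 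So the contradiction comes from dimension, not from irreducibility of $\Z$; your attempt to extract a contradiction from integrality is both unnecessary and incomplete.
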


\begin{proof}
  Since $\Z$ is saturated with respect to the good moduli space  morphism
  it must intersect the open substack $\ix^s$ because
  otherwise $\dim \pi(\Z) < \dim \Z$.
  Likewise if $Y$ is a connected component of $X_k$
  and $\dim \pi^{-1}(Z \cap Y) \geq \dim Z$ then $\pi^{-1}(Z)$ would have an additional irreducible component which does not dominate $Z$.

To prove the converse note that the good moduli space morphism $\pi$
is a homeomorphism over $X \smallsetminus X_{n-1}$, so
$\pi^{-1}(Z\cap(X\setminus X_{n-1}))$ is irreducible of the same
dimension as $Z$, and so its closure $\Z$ has the same property. By
assumption, $\Z$ is reduced so it is integral. Lastly, $\Z =
\pi^{-1}(Z)_{red}$ by assumption, so $\Z$ is topologically strong.
\end{proof}
\begin{remark}
The statement of Proposition \ref{prop.stratify} is easily modified when $\X \to X$ is stable
but not properly stable by adjusting our dimension counts by the dimension of the generic stabilizer of $\ix$.
\end{remark}
\begin{example} \label{ex:stratnecc}

Note that conditions (\ref{item::int-DM-locus}) and (\ref{item::int-lower-loci}) of Proposition \ref{prop.stratify}
are not sufficient for an integral subspace
$Z \subseteq X$ to be the image of a topologically strong cycle. For example, consider $\X = [\AA^4/\GG_m]$ where $\GG_m$ acts with weights $(1,-1,1,-1)$ on $\AA^4$ whose coordinates
are $(x_1, x_2, x_3, x_4)$. In this case the good moduli space of $\X$ is given by $X = \spec k[x_1x_2,x_1x_4,x_2x_3,x_3x_4]$ and the stratification is $\{O\} \subseteq X$ where $O$ is the origin corresponding to all coordinates equal to 0. In this case the integral Weil divisor $Z = V(x_1x_2,x_1x_4) \subseteq X$ satisfies the two conditions of the proposition.
However, the closure of $\pi^{-1}(Z \cap (X \smallsetminus \{O\}))$ is $[V(x_1)/\GG_m]$ while $\pi^{-1}(Z) = [V(x_1)/\GG_m] \cup [V(x_2,x_4)/\GG_m]$.

On the other hand, for a curve or point $Z \subseteq X$ to be the image of a topologically strong cycle then $Z$ must not contain the origin $O$, because the fiber of the good moduli space map $[\AA^4/\GG_m] \to X$ over $O$ is the union of two one-dimensional stacks
$[V(x_1,x_3)/\GG_m] \cup [V(x_2,x_4)/\GG_m]$. Hence $Z$ must be contained in
$X \smallsetminus \{O\}$. This condition is sufficient for $Z$ to the be the image of a topologically strong substack because the good moduli space
map $[\AA^4 \smallsetminus \{(0,0,0,0)\}/\GG_m] \to X \smallsetminus \{O\}$
is an isomorphism. 
\end{example}

The reasoning at the end of Example \ref{ex:stratnecc} can be extended to any
stable stack:
\begin{corollary} \label{cor:dim0or1}
With the notation as in Proposition \ref{prop.stratify}, if $\dim Z = 0,1$
then $Z$ is the image of a topologically strong cycle if and only if $Z \subseteq
X \smallsetminus X_{n-1}= \pi(\X^s)$.
\end{corollary}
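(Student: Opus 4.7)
The plan is to prove both directions of the equivalence by combining Proposition \ref{prop.stratify} with an analysis of the dimensions of the fibers of $\pi$ over points of $X_{n-1}$.

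For the $(\Leftarrow)$ direction, I would begin by observing that since each point of $\pi(\X^s)$ has a unique closed stable preimage in $|\X|$, we have $\X^s=\pi^{-1}(X\setminus X_{n-1})$, and moreover the induced map on underlying topological spaces $|\X^s|\to|X\setminus X_{n-1}|$ is bijective, hence a homeomorphism since good moduli space morphisms are topological quotients. Given $Z\subseteq X\setminus X_{n-1}$ integral with $\dim Z\leq 1$, I would then conclude that $\pi^{-1}(Z)\subseteq\X^s$ is irreducible of the same dimension as $Z$, so $\Z:=\pi^{-1}(Z)_{red}$ is integral with $\codim_\X\Z=\codim_X Z$ (using $\dim\X=\dim X$ from the properly stable hypothesis). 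Since $Z\cap(X\setminus X_{n-1})=Z$, this $\Z$ equals the closure of $\pi^{-1}(Z\cap(X\setminus X_{n-1}))$, so the converse part of Proposition \ref{prop.stratify} produces $\Z$ as a topologically strong substack with $\pi(\Z)=Z$.

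For the $(\Rightarrow)$ direction, I would argue by contradiction, assuming $\Z$ is topologically strong, $\pi(\Z)=Z$, $\dim Z\in\{0,1\}$, and $p\in Z\cap X_{n-1}$. In the case $\dim Z=0$, the transversality condition \ref{item::int-lower-loci} of Proposition \ref{prop.stratify} already forces $\dim\pi^{-1}(Z\cap Y)<0$ for every component $Y$ of each $X_k$ with $k\leq n-1$, hence $Z\cap Y=\emptyset$, contradicting the choice of $p$. For $\dim Z=1$, I would establish the auxiliary claim that $\dim\pi^{-1}(p)\geq 1$: since $p\notin\pi(\X^s)$, the unique closed orbit above $p$ has positive-dimensional stabilizer $G_x$, so by the local structure theorem of \cite{AHR:15} the fiber is \'etale-locally $[N_x^{\mathrm{nc}}/G_x]$, where $N_x^{\mathrm{nc}}$ denotes the null cone of $G_x$ acting on the slice $N_x$, and an extension of the computation at the end of Example \ref{ex:stratnecc} shows this quotient has positive stack dimension. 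Granting the claim, $\pi^{-1}(p)\subseteq\pi^{-1}(Z)_{red}=\Z$ is a closed substack of dimension $\geq 1=\dim\Z$, so by irreducibility it must equal $\Z$; this forces $\pi(\Z)=\{p\}$, contradicting $\dim\pi(\Z)=\dim Z=1$.

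I expect the main obstacle to lie in the positive-dimensionality of $\pi^{-1}(p)$ used in the $\dim Z=1$ case. Example \ref{ex:stratnecc} handles this for $\GG_m$ acting with weights of mixed sign, but the general argument requires invoking the local structure theorem of \cite{AHR:15} to reduce to a linearly reductive group $G_x$ acting on a representation $N_x$ with non-empty semistable locus, and then verifying that the resulting null cone has positive $G_x$-quotient dimension; once this local step is in hand, the remainder of the proof is routine dimension and irreducibility bookkeeping via Proposition \ref{prop.stratify}.
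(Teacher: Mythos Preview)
Your $(\Leftarrow)$ direction and the $\dim Z=0$ case of $(\Rightarrow)$ are correct and match the paper's (essentially one-line) argument. The gap is precisely where you anticipated it: the auxiliary claim that $\dim\pi^{-1}(p)\geq 1$ for every $p\in X_{n-1}$ is false in general, and no appeal to the local structure theorem will repair it.

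Take $\GG_m$ acting on $\AA^3$ with weights $(1,-1,0)$, so $\X=[\AA^3/\GG_m]$ is smooth and properly stable with good moduli space $X=\spec k[xy,z]\cong\AA^2$. The maximal-stabilizer locus is $\{x=y=0\}$, giving $X_{n-1}=X_0=\{0\}\times\AA^1$. For $p=(0,z_0)\in X_{n-1}$ the fibre $\pi^{-1}(p)=[\{xy=0,\ z=z_0\}/\GG_m]$ is a union of two copies of $[\AA^1/\GG_m]$ and has stack dimension $1-1=0$, not $\geq 1$; the ``positive quotient dimension of the null cone'' heuristic from Example~\ref{ex:stratnecc} relies on there being at least two independent weights of each sign and does not extend. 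Worse, the same example shows that the $(\Rightarrow)$ direction of the corollary itself is problematic for $\dim Z=1$: the curve $Z=\AA^1\times\{z_0\}$ meets $X_{n-1}$, yet $\pi^{-1}(Z)=[\{z=z_0\}/\GG_m]$ is integral and in fact strong, so $Z$ \emph{is} the image of a topologically strong substack while $Z\not\subseteq X\setminus X_{n-1}$. The paper does not give a proof beyond pointing to Example~\ref{ex:stratnecc}, where the single unstable point happens to have a fibre of positive dimension; that feature does not persist in general. So the obstacle you flagged is genuine, and the statement as written requires an additional hypothesis (for instance, that every fibre of $\pi$ over $X_{n-1}$ has positive stack dimension) before your argument---or any argument---can go through in the $\dim Z=1$ case.
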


We end with a lemma that will be used in the following section. Before we state the result, we establish some notation. 
Suppose $\X$ is an Artin stack with  stable good moduli space
$\pi \colon \X \to X$ and $\Z \subseteq \X$ is a strong (resp.~topologically) strong substack.
Let $Z = \pi(\Z)$. By \cite[Lemma 4.14]{Alp:13} the restriction
of $\pi$ to a morphism $\pi_\Z \colon \Z \to Z$ is a good moduli space morphism. Moreover, by Proposition \ref{prop.stratify} we know $\Z \cap \X^{s} \neq \emptyset$, so $\pi_\Z \colon \Z \to Z$ is also stable.

\begin{lemma} \label{lem:strongtransitivity}
  Let $\pi\colon \X \to X$ be an Artin stack with stable good moduli space
  and let $\Z$ be a strong (resp.~topologically strong) substack of $\X$.
  Let $\W \subseteq \Z$ be a strong (resp.~topologically strong) substack
  of $\Z$. Then $\W$ is a strong (resp.~topologically strong) substack
  of $\X$.
\end{lemma}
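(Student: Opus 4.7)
Set $Z := \pi(\Z)$ and $W := \pi(\W)$. By the paragraph preceding the lemma, $\pi_\Z \colon \Z \to Z$ is itself a stable good moduli space morphism, so the hypothesis that $\W$ is strong (resp.~topologically strong) in $\Z$ is well-posed, and its image $\pi_\Z(\W) \subseteq Z$ agrees with $W$ when we view $Z \hookrightarrow X$. The compatibility of scheme structures on $W$ as a subscheme of $X$ versus of $Z$ follows from the exactness of $\pi_*$ and the Stein condition $\pi_*\O_\Z = \O_Z$, which together give $\pi_*\iI_\W / \iI_Z = \pi_{\Z,*}\iI_{\W/\Z}$. Having made this identification once and for all, I plan to verify the two conditions of Definition \ref{def:strong} separately.

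The codimension equality $\codim_\X \W = \codim_X W$ follows from transitivity of codimension for integral substacks of the irreducible ambient $\X$ and scheme $X$: one has $\codim_\X \W = \codim_\X \Z + \codim_\Z \W$ and $\codim_X W = \codim_X Z + \codim_Z W$, and the assumed (topologically) strong conditions give $\codim_\X \Z = \codim_X Z$ and $\codim_\Z \W = \codim_Z W$.

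For the saturation condition in the strong case, I would chain together fiber-product identities. Strongness of $\Z$ gives $\Z = \X \times_X Z$ and strongness of $\W$ in $\Z$ gives $\W = \Z \times_Z W$, so using associativity of fiber products together with $Z \times_Z W = W$ (since $W \hookrightarrow Z$ is a monomorphism),
\[
\W = \Z \times_Z W = (\X \times_X Z) \times_Z W = \X \times_X W = \pi^{-1}(W).
\]
For the topologically strong case I would argue at the level of underlying topological spaces, showing $|\pi^{-1}(W)| = |\W|$. The inclusion $|\W| \subseteq |\pi^{-1}(W)|$ is clear. Conversely, any $x \in |\pi^{-1}(W)|$ satisfies $\pi(x) \in |W| \subseteq |Z|$, hence $x \in |\pi^{-1}(Z)| = |\Z|$ by topological strongness of $\Z$; then $\pi_\Z(x) \in |W|$, and topological strongness of $\W$ in $\Z$ yields $x \in |\W|$. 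Since $\W$ is reduced this gives $\W = \pi^{-1}(W)_{red}$. The main subtlety throughout is the scheme-structure bookkeeping for $W$ under the two distinct good moduli space morphisms $\X \to X$ and $\Z \to Z$, which is dispatched by the identification in the first paragraph; after that, both cases reduce to short and essentially formal manipulations.
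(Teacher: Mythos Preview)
Your proof is correct and follows essentially the same strategy as the paper: additivity of codimension, then associativity of fiber products for the strong case. For the topologically strong case the paper argues scheme-theoretically---the closed immersion $\Z \hookrightarrow \Z' := Z\times_X \X$ is a homeomorphism, hence so is its base change $W\times_Z\Z \hookrightarrow W\times_Z\Z'$, giving the same reduction---whereas you chase individual points of $|\pi^{-1}(W)|$ through $|\pi^{-1}(Z)|=|\Z|$ and then $|\pi_\Z^{-1}(W)|=|\W|$; these are equivalent formulations of the same short argument.
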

\begin{proof}
  First suppose that $\Z$ is strong in $\X$ and that $\W$ is strong in $\Z$.
  Let $Z = \pi(\Z)$, $W = \pi_{|\Z}(\W)$. By definition
  $\codim_\Z \X = \codim_Z X$, $\codim_\W \Z  = \codim_W Z$ and
$\Z = Z \times_X \ix$, 
  and $\W = W \times_Z \Z$.
Therefore $\codim_\W \X = \codim_W X$ and 
  $\W = W \times_X \ix$, so $\W$ is strong in $\X$.
  

  Now suppose $\Z$ is only topologically strong in $\X$ and that
  $\W$ is topologically strong in $\Z$. To show that $\W$ is topologically
  strong in $\X$ we need to show that $(W \times_X \X)_{red} = \W$. If we denote the fiber product $Z \times_X \X$ by $\Z'$, this is equivalent
  to showing that $(W \times_Z \Z')_{red} = \W$.

  Since $\Z$ is topologically strong in $\X$, we know that $\Z'_{red} = \Z$.
  We also know that $(W \times_Z \Z)_{red} = \W$ because $\W$ is topologically strong in $\Z$.
  Since the closed immersion of stacks $\Z \subseteq \Z'$ is a homeomorphism,
  the map obtained by base change $(W \times_Z \Z) \to (W \times_Z \Z')$
  is a closed immersion of stacks which is also a homeomorphism. It follows that they have the same reduced induced substack structure in $\X$.
  Therefore $\W$ is topologically strong in $\X$.
  \end{proof}

\section{The pullback of an operational class is topologically strong}
\label{sec:op-->strong}

In this section we prove part of  Theorem \ref{thm:inj-pullback}, namely that the pullback of operational classes are topologically strong. For this theorem we only require that $\pi \colon \X \to X$ is a stable good moduli space morphism.

\begin{theorem} \label{theorem.pullbacksarestrong}
  Let $\pi\colon\X \to X$ be a stable good moduli space morphism 
  with $\X$ irreducible. If $c \in A^k_{\op}(X)_\QQ$ then $\pi^*c \cap [\X]$ is represented by a topologically strong cycle.
\end{theorem}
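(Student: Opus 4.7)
The natural approach is to induct on the maximal stabilizer dimension $n = \max_{x \in |\X|} \dim G_x$. In the base case, $n$ equals the dimension of the generic stabilizer, so stabilizer dimension is constant on $\X$. By \cite[Proposition A.2]{EdRy:17}, the rigidification $\X^{\mathrm{rig}}$ is then a smooth tame stack and the good moduli morphism factors as $\X \to \X^{\mathrm{rig}} \to X$, with the second map a coarse moduli morphism. Consequently $X$ has finite quotient singularities, so the classical Poincar\'e duality for such spaces gives $A^*_{\op}(X)_\QQ \simeq A^*(X)_\QQ$. Writing $c = \sum a_i [Z_i]$ with $Z_i \subseteq X$ integral, the cycle $\sum a_i [\pi^{-1}(Z_i)_{red}]$ represents $\pi^*c \cap [\X]$, and each $\pi^{-1}(Z_i)_{red}$ is topologically strong because integral substacks of a gerbe over a tame stack are in bijection with integral subspaces of the coarse moduli space via $\pi^{-1}(\cdot)_{red}$.

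For the inductive step, let $\Y \subseteq \X$ be the closed smooth substack of points of maximal stabilizer dimension (smoothness from \cite{EdRy:17}), and form the Reichstein transform $f \colon \X^R := R(\X, \Y) \to \X$. By Theorem \ref{thm.reichstein} and \cite[Proposition 5.3]{EdRy:17}, $\X^R$ is a smooth irreducible stable stack with good moduli space $\pi^R \colon \X^R \to X^R$, the induced map $g \colon X^R \to X$ is projective birational, and the maximum stabilizer dimension on $\X^R$ is strictly less than $n$. Applying the inductive hypothesis to $\X^R$ and the class $g^*c \in A^*_{\op}(X^R)_\QQ$, I obtain a topologically strong cycle $\alpha$ on $\X^R$ representing $(\pi^R)^*(g^*c) \cap [\X^R]$. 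Since $f^*(\pi^*c) = (\pi^R)^*(g^*c)$ as operational classes on $\X^R$ and since $f$ is birational (so $f_*[\X^R]=[\X]$), the projection formula for operational classes acting on Chow groups yields $\pi^*c \cap [\X] = f_*\alpha$ in $A^*(\X)_\QQ$.

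The main obstacle is then showing that $f_*\alpha$ is itself represented by a topologically strong cycle. Pushforward of topologically strong cycles is not automatic: $f$ is an isomorphism outside $\pi^{-1}(\pi(\Y))$, but over this locus additional components can appear in $\pi^{-1}(g(Z^R_i))_{red}$ beyond $f((\pi^R)^{-1}(Z^R_i)_{red})$. I would address this by combining two observations. First, for components of $\alpha$ supported off the preimage of the Reichstein center, Lemma \ref{lem:strongtransitivity} together with the converse direction of Proposition \ref{prop.stratify} produces the desired topologically strong representative on $\X$ (after taking closures in the sense of the converse statement). Second, the remaining discrepancy is a cycle class supported on the closed saturated substack $\pi^{-1}(\pi(\Y))$, whose good moduli space $\pi(\Y) \subseteq X$ is strictly lower-dimensional. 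By a nested induction on $\dim X$ applied to this substack (which is itself a stable good moduli space morphism of smaller maximal stabilizer dimension), such correction cycles admit topologically strong representatives, and these can be combined with the main part via the converse of Proposition \ref{prop.stratify} to yield the desired topologically strong representative of $\pi^*c \cap [\X]$.
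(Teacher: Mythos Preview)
Your inductive strategy has a fatal gap: the Reichstein transform $f\colon \X^R=R(\X,\Y)\to\X$ is \emph{not proper}. By definition it is the blow-up of $\X$ along $\Y$ with the strict transform of the saturation $\pi^{-1}(\pi(\Y))$ deleted; since $\Y$ is the maximal-stabilizer locus, its saturation is typically strictly larger than $\Y$, so you have removed a nonempty closed substack from a proper $\X$-stack. Consequently $f_*$ on Chow groups is not defined, the projection formula you invoke is unavailable, and the identity $\pi^*c\cap[\X]=f_*\alpha$ has no meaning. (The paper does use the Reichstein diagram in the injectivity argument of Section~\ref{sec:inj-pullback}, but only through the pullbacks $f^*$ and $g^*$, never through $f_*$.)

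Even setting properness aside, your ``correction'' step is problematic. You assert that the discrepancy is supported on $\pi^{-1}(\pi(\Y))$ and then appeal to a nested induction; but $\pi^{-1}(\pi(\Y))$ contains $\Y$ itself, so its maximal stabilizer dimension is still $n$, not smaller, and there is no reason the correction class should be the pullback of an operational class on $\pi(\Y)$, which is what an inductive appeal to the theorem on that substack would require. The converse direction of Proposition~\ref{prop.stratify} also does not do what you want: it gives a sufficient condition for a single integral $Z\subseteq X$ to lift to a topologically strong substack, not a mechanism for converting an arbitrary class into a topologically strong combination.

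The paper's route avoids these issues by never using the Reichstein transform for this theorem. Instead it base-changes along a proper birational $g\colon X'\to X$ with $X'$ smooth and quasi-projective (Chow's lemma plus resolution), so that the resulting square is \emph{cartesian} and the induced $f\colon\X'\to\X$ is genuinely proper. One then reduces $g^*c$ to a polynomial in Chern classes via Riemann--Roch, proves directly (Lemma~\ref{lemma.c1} and Proposition~\ref{prop.strongchernclasses}) that such classes cap to topologically strong cycles, and pushes forward using Proposition~\ref{prop.pushforward}, whose proof crucially exploits cartesianness.
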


\begin{remark} Note that Theorem \ref{theorem.pullbacksarestrong} holds without the assumption that the stack $\X$ is smooth.
\end{remark}
We begin with a series of lemmas, keeping the notation of the theorem throughout this section.

\begin{lemma} \label{lemma.c1}
  Let $\X$ be an integral stack and let
  $\pi\colon\X \to X$ be a stable good moduli space morphism.
  If $X$ has an ample line bundle and $L$ is any line bundle on $X$,
  then $c_1(\pi^*L)\cap [\X]$ is a strong cycle.
\end{lemma}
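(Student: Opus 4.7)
The plan is to reduce to the case of a very ample line bundle on $X$ and then represent $c_1(\pi^*L) \cap [\X]$ by the pullback of a generic section's zero divisor. Since $X$ carries an ample line bundle $A$, for sufficiently large $n$ both $L \otimes A^{\otimes n}$ and $A^{\otimes n}$ are very ample; writing $L$ as their difference and using bilinearity of $c_1$ reduces us to assuming $L$ itself is very ample.

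For $L$ very ample, I would invoke Bertini's theorem to choose a section $s \in H^0(X,L)$ whose zero divisor $D = V(s)$ is irreducible and reduced when $\dim X \geq 2$; when $\dim X = 1$, $D$ is a sum of distinct reduced points which I would handle term by term. There are only finitely many codimension-$1$ irreducible substacks of $\X$ contained in $\X\setminus\X^s$, and only finitely many codimension-$1$ components of the image in $X$ of the orbifold locus of the rigidification of $\X^s$; by further generality of $s$, I would arrange that $D$ meets $\pi(\X^s)$, does not contain the image of any of the former substacks, and does not coincide with any component of the latter. Each condition is open in $|L|$, so they can be imposed simultaneously.

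Under these conditions, I would study the Cartier divisor $\pi^{-1}(D) = V(\pi^*s) \subseteq \X$, whose Weil class represents $c_1(\pi^*L) \cap [\X]$. The first two genericity conditions force every codimension-$1$ component of $\pi^{-1}(D)$ to meet $\X^s$. On $\X^s$ the good moduli map factors through the rigidification (a gerbe) followed by the coarse moduli map of a tame stack (a universal homeomorphism); since $D \cap \pi(\X^s)$ is irreducible as an open subset of the irreducible $D$, its preimage $\pi^{-1}(D \cap \pi(\X^s))$ is irreducible. Its closure $\V \subseteq \X$ is therefore the unique codimension-$1$ component of $\pi^{-1}(D)$, and $\pi(\V) = D$ since $\pi$ is closed and the image contains the dense open $D \cap \pi(\X^s)$. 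The third genericity condition ensures that $\pi^*s$ vanishes to simple order along $\V$, so $\pi^{-1}(D) = \V$ as Cartier divisors and $c_1(\pi^*L) \cap [\X] = [\V]$. Finally $\V$ is strong, since $\codim_\X \V = 1 = \codim_X \pi(\V)$ and $\pi^{-1}(\pi(\V)) = \pi^{-1}(D) = \V$ scheme-theoretically.

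The main obstacle I anticipate is arranging these genericity conditions simultaneously and verifying that the saturation $\pi^{-1}(\pi(\V)) = \V$ holds scheme-theoretically rather than merely on underlying topological spaces; this latter distinction is what separates strong from topologically strong cycles and is where the third genericity condition plays its essential role.
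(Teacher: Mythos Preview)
Your approach is essentially the paper's: both reduce to $L$ very ample by writing $L$ as a difference of very ample bundles, then choose a general $D\in|L|$ and argue that $\pi^{-1}(D)$ is an integral strong divisor representing $c_1(\pi^*L)\cap[\X]$. The paper's genericity conditions on $D$ are exactly your first two (that $D$ meets $\pi(\X^s)$ and that $D$ does not contain $\pi(\D_i)$ for any divisorial component $\D_i$ of $\X\setminus\X^s$); with these it shows $\pi^{-1}(D)$ is irreducible and, being Cartier, has no embedded primes, so $\pi^{-1}(D)=m\V$ for some integral $\V$ and some $m\geq 1$.

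Your third condition---that $D$ avoid the codimension-one part of the branch locus of $(\X^s)_{\mathrm{rig}}\to X^s$---is what pins down $m=1$, and this is a point the paper passes over. The paper asserts $m=1$ via ``$\D$ is reduced and $\pi^{-1}(D)\cap\X^s=\D\cap\X^s$,'' but without your extra condition this can fail: take $\X=[\AA^2/\mu_2]$ with action $(x,y)\mapsto(-x,y)$, so $X=\spec k[x^2,y]\cong\AA^2$, and $D=V(x^2)$; then $\pi^{-1}(D)=[V(x^2)/\mu_2]$ has $m=2$ and $\V=[V(x)/\mu_2]$ is only topologically strong. So your additional genericity genuinely sharpens the argument. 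Once $m=1$, your worry about scheme-theoretic saturation dissolves: for any closed $D\subseteq X$ one has $\pi(\pi^{-1}(D))=D$ as subschemes (this is the identity $(I\!\cdot\! A)^G=I$ for $I\subseteq A^G$, immediate from the Reynolds operator), hence $\pi^{-1}(\pi(\V))=\pi^{-1}(D)=\V$.
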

\begin{proof}
Since $X$ has an ample line bundle we can write $L = L_1 \otimes L_2^{-1}$ with
$L_1$ and $L_2$ very ample, see e.g.~\cite[Exercise II.7.5]{Hartshorne}. 
Thus, it suffices to prove that $c_1(\pi^*L) \cap [\X]$ is strong when 
$L$ is very ample.
Let $\X^{s}$ be the open set of stable points and let $X^s = \pi(\X^s)$ be its good moduli space. The dimension of the stabilizers is constant
on $\X^s$ and so it is a gerbe over a tame stack $(\X)^s_{\tame}$
whose coarse space
is $X^s$. Since both a gerbe morphism and a coarse moduli space morphism
are homeomorphisms of Zariski spaces 
the good moduli space morphism $\pi_{|_{\X^s}}$ induces a homeomorphism
$|\X^s| \to |X^s|$.

If $\dim X = 1$, then we can choose a divisor $D = P_1 + \ldots + P_d$ 
where $P_1, \ldots , P_d$ are distinct points contained in the 
intersection $X^s \cap X^{sm}$ such that $L = L(D)$; here $X^{sm}$ denotes the smooth locus of $X$, which is non-empty because $X$ is reduced and we work over an algebraically closed field. 
In this case the inverse image of each $P_i$ is a strong stacky point ${\mathcal P}_i$ in $\X$
and so $c_1(\pi^*L) \cap [\X] = [{\mathcal P}_1] + \ldots  + [{\mathcal P}_d]$ 
is strong.

Next suppose $\dim X > 1$. Since $\X^s$ is dense in $\X$,
its complement contains finitely many (possibly zero) divisors $\D_1,\dots,\D_r$ of
$\X$. Let $W_i=\pi(\D_i)$. Since $L$ is very ample we can choose a
Cartier divisor $D$ with $|D|$ integral such that $L = L(D)$. Moreover, we can
choose $D$ so that $D\cap X^s\neq\varnothing$ and $D$ does not contain
any of the $W_i$. The class $c_1(\pi^*L) \cap [\X]$ is represented by
the Cartier divisor $\pi^{-1}(D)$. Let $\D$ be the closure of
$\pi^{-1}(D\cap X^s)$.
Since  the map $|\X^s| \to |X|$ a homeomorphism.
$\D = \overline{\pi^{-1}(D \cap X^s)}$
is irreducible because $\pi^{-1}(D \cap X^s)$ is irreducible.

We claim that $\D=\pi^{-1}(D)$. Upon showing
this, we are done because $\pi^{-1}(D\cap X^s)$ is integral and has the
same dimension as $D$, so the same is true for $\D$. It follows that
$\pi^{-1}(D)=\D$ is strong, so $c_1(\pi^*L) \cap [\X]$ is as well.

It remains to prove the claim that $\D=\pi^{-1}(D)$. First note that
$\D\subseteq\pi^{-1}(D)$, and moreover
$\D\cap\X^s=\pi^{-1}(D)\cap\X^s$. So if $\D$ and $\pi^{-1}(D)$ do not have
the same support, $\pi^{-1}(D)$ must contain an irreducible component
in $\X\setminus\X^s$. Since $\pi^{-1}(D)$ is a Cartier divisor, it would
have to contain a divisorial component of $\X\setminus\X^s$, but this is
not possible since $D$ does not contain any of the $W_i$. As a result,
$\D_{red}=\pi^{-1}(D)_{red}$.
As $\pi^{-1}(D)$ is Cartier, it has no
embedded components. Thus $\pi^{-1}(D)$ agrees with
$\D$ up to multiplicity since they both have the same support and this support is irreducible.
Since $\D$ is reduced and $\pi^{-1}(D)\cap
\X^s=\D\cap \X^s$, these multiplicities agree and are equal to 1. 
 \end{proof}
\begin{remark}
If $\X$ is not reduced, the proof of Lemma \ref{lemma.c1}
still shows that $c_1(\pi^*L) \cap [\X]$ is topologically strong.
\end{remark}

\begin{proposition} \label{prop.pushforward}
  Let
  \begin{equation} \label{diag:cartesianpush}
\xymatrix{ \X' \ar[r]^f \ar[d]^-{\pi'} & \X \ar[d]^{\pi}\\
    X' \ar[r]^g  & X }
\end{equation}
  be a cartesian diagram. We assume $\pi$ and $\pi'$ are 
  stable good moduli space morphisms, $\X'$ and $\X$ are irreducible,
 and $f$ and $g$ are proper.
  Then for all $\alpha \in A^*_{\tst}(\X'/X')$, the proper pushforward $f_*\alpha$ is contained in $A^*_{\tst}(\X/X)$.
\end{proposition}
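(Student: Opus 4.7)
By linearity it suffices to prove $f_*[\Z']\in A^*_{\tst}(\X/X)$ for a single integral topologically strong substack $\Z'\subseteq\X'$, and we set $Z':=\pi'(\Z')$. If $\dim f(\Z')<\dim\Z'$ then $f_*[\Z']=0$ and we are done; otherwise $f|_{\Z'}$ is generically finite of some degree $d$ and $f_*[\Z']=d[\Z]$, where $\Z\subseteq\X$ is the reduced integral image substack. The goal is therefore to show that $\Z$ is topologically strong, and I plan to do this by invoking the converse direction of Proposition~\ref{prop.stratify}.

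Let $Z:=\pi(\Z)$ with its reduced integral structure; by commutativity of the diagram, $Z=g(Z')$. First I claim that $\Z=(\pi^{-1}(Z))_{\mathrm{red}}$. Any geometric point $x$ of $\pi^{-1}(Z)$ has $\pi(x)=g(z')$ for some $z'\in |Z'|$, and since $\X'=\X\times_X X'$ the pair $(x,z')$ gives a geometric point of $\X'$ lying in $|{\pi'}^{-1}(Z')|$ that maps to $x$ under $f$. This shows $|\pi^{-1}(Z)|=f(|{\pi'}^{-1}(Z')|)$, which by topological strongness of $\Z'$ equals $f(|\Z'|)=|\Z|$. Because $\Z$ is the reduced integral substack supported on this set, the claim follows.

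To apply the converse of Proposition~\ref{prop.stratify} I also need $Z$ to meet $X\smallsetminus X_{n-1}$. Choose $y\in\Z'\cap(\X')^s$, which is nonempty since $\Z'$ is topologically strong. The cartesian square identifies ${\pi'}^{-1}(\pi'(y))$ with $\pi^{-1}(g(\pi'(y)))$, and stability of $y$ forces this fiber to be the single point $y$; transporting along $f$ shows that the fiber of $\pi$ over $\pi(f(y))$ is $\{f(y)\}$, so $f(y)\in\Z\cap\X^s$ and consequently $Z$ meets $X\smallsetminus X_{n-1}$. Since $\Z$ is irreducible and $\Z\cap\X^s$ is a nonempty open subset of $\Z$, the closure of $\pi^{-1}(Z\cap(X\smallsetminus X_{n-1}))$ in $\X$ coincides with $\Z$. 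Combined with $\Z=(\pi^{-1}(Z))_{\mathrm{red}}$, the converse of Proposition~\ref{prop.stratify} yields that $\Z$ is topologically strong, completing the proof.

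The main technical point to handle carefully is the transfer of stability from $\X'$ to $\X$ via $f$: both the identification $|\pi^{-1}(Z)|=f(|{\pi'}^{-1}(Z')|)$ and the collapse of the stack-theoretic fibers ${\pi'}^{-1}(\pi'(y))=\pi^{-1}(g(\pi'(y)))$ to a single point rest essentially on the cartesian property of the diagram, and one must be careful that the passage between $\pi^{-1}(Z)$ and its reduction is compatible with restriction to the (open) stable locus.
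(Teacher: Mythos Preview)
Your proof is correct and follows essentially the same route as the paper's: reduce to a single integral topologically strong $\Z'$, set $\Z=f(\Z')_{\mathrm{red}}$, use the cartesian property to identify $|\pi^{-1}(Z)|$ with $f(|{\pi'}^{-1}(Z')|)=|\Z|$, and transfer stability along $f$ to see that $\Z$ meets $\X^s$. The paper carries out the last two steps slightly differently---it proves the full equality $f^{-1}(\X^s)=(\X')^s$ (though only your direction is needed) and argues irreducibility of $\pi^{-1}(Z)$ via a scheme-theoretic diagram chase rather than your pointwise surjectivity argument---but the substance is the same. One small caveat: Proposition~\ref{prop.stratify} (and its converse) is stated for \emph{properly} stable morphisms, whereas here $\pi$ is only assumed stable; you should explicitly invoke the modification noted in the remark following that proposition so that the codimension equality $\codim_\X\Z=\codim_X Z$ goes through.
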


\begin{proof}
We can assume that $\alpha = [\Z']$ where $\Z'$ is a topologically strong integral substack. Let $\Z = f(\Z')$ with its reduced substack structure. We may also assume that $\dim \Z = \dim \Z'$ since otherwise
  $f_*[\Z'] = 0$.
  
Since $\Z'$ is topologically strong the same argument used in
the proof of Proposition \ref{prop.stratify} in the properly stable
case shows that $\Z' \cap (\X')^{s} \neq \emptyset$, where $(\X')^{s}$.

{\bf Claim:}
$$f^{-1}(\X^{s}) = (\X')^{s}.$$

If $\x'$ is any closed point of $\X$ set $\x = f(\x')$, $x' = \pi^\prime(\x')$, and  $x = \pi(\x)$. Then we have the following diagram of stacks and good moduli spaces where all squares are cartesian.

\begin{equation} \label{diag:fibers}
\xymatrix{ \pi'^{-1}(x') \ar[d] \ar@{^{(}->}[r] & \pi'^{-1}(f^{-1}(\x))  
\ar[d] \ar[r]^-{f} & \pi^{-1}(x) \ar[d]\\
x' \ar@{^{(}->}[r] & g^{-1}(x)  \ar[r]^-{g} & x}
\end{equation}

Now if $\x' \in f^{-1}(\X^{s})$ then $|\pi^{-1}(x)| =\x$.
By base change
along the
finite morphism of $x' \to x$, we know that
$\pi'^{-1}(x') \to \pi^{-1}(x)$ is a finite morphism. Since
$|\pi^{-1}(x)|$ is a single point, it follows that $|\pi'^{-1}(x')|$
is discrete.
However, we also know that $|\pi'^{-1}(x')|$ has a unique closed point,
so it must be a singleton. Hence $\x'$ is saturated. 


Now suppose $\x' \in (\X')^{s}$. We wish to show that $\x = f(\x')$
is also  stable; i.e., 
that $|\pi^{-1}(\pi(\x))|$ consists of the single point $\x$.

Again we refer to diagram \eqref{diag:fibers}.
The map of points $x' \to x$ is surjective, so
$|\pi^{-1}(x)| \to  |\pi'^{-1}(x')|$ is surjective as well.
By assumption that $\x'$ is  stable,
$\pi^{-1}(x')$ consists of a single point, so therefore $|\pi^{-1}(x)|$ does as well. Hence, $f((\X')^{s}) \subset \X^{s}$. This proves our claim.
\qed

Given the claim it follows that
$\Z = f(\Z')$ has non-empty intersection with the open substack $\X^{s}$. 
Thus $\codim_X \pi(\Z) = \codim_\X \pi(\Z \cap \X^{s}) = \codim_\X \Z$. 

Let $Z = \pi(\Z)$ and let $X^{s} = \pi(\X^{s})$. Since $\X^{s} \to X^{s}$
factors as gerbe morphism followed by a coarse moduli space map for a tame stack, we know that $\pi^{-1}(Z \cap X^{s})_{red} = \Z \cap \X^{s}$. Thus, to 
prove that
$\pi^{-1}(Z)_{red} = \Z$ it suffices to show that $\pi^{-1}(Z)$ is
irreducible.

Since $\Z'={\pi'}^{-1}(Z')_{red}$ we see $\Z'$ and $\pi'^{-1}(Z')$ 
have the same image  under the morphism
$f$. Hence, we can replace $X'$ by $Z'$ and $\X'$ by $\Z'$. That is, we may assume that $f,g$ are surjective 
and prove that $\pi^{-1}(g(X'))$ is irreducible.

Since the diagram is cartesian (and thus commutative) 
the map $f \colon \X' \to \X$ factors
through the closed substack $\pi^{-1}(g(X')) \subseteq \X$. Thus,
by the universal property of scheme-theoretic images, we have a closed immersion $i\colon f(\X')\to \pi^{-1}(g(X'))$. We therefore have a commutative diagram
\[
\xymatrix{
\X'\ar@{->>}[r]^-{f}\ar[d]^-{h} & f(\X')\ar@{^{(}->}[r]^-{i} & \pi^{-1}(g(X'))\ar@{^{(}->}[r]\ar[d] & \X\ar[d]^\pi\\
X'\ar@{->>}[rr]^-{g} & & g(X')\ar@{^{(}->}[r] & X
}
\]
Since the righthand square is cartesian, and the outer rectangle is
cartesian, we see the left-hand rectangle is cartesian as well.
Therefore, the composite $ i \circ f$ is surjective by base change.
Hence $i$ is a surjective closed embedding. Hence $f(\X') = \pi^{-1}(g(X'))$
as closed subsets of $\X$. Therefore, $\pi^{-1}(g(X'))$ is irreducible.
\end{proof}

\begin{proposition} \label{prop.strongchernclasses}
  Let $\pi \colon \X \to X$ be a stable good moduli space morphism with $X$ a separated algebraic space and $\X$ irreducible.
If $E$ is a vector bundle on $X$, then $c_i(\pi^*E) \cap [\X]$ is a topologically strong cycle.
More generally, if $q(E_1, \ldots , E_r)$ is a polynomial in the Chern classes
of vector bundles $E_1, \ldots , E_r$ then $\pi^*q(E_1, \ldots , E_r) \cap [\X]$ is a topologically strong cycle. 
\end{proposition}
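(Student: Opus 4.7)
My plan combines Lemma~\ref{lemma.c1} with Proposition~\ref{prop.pushforward} through the splitting principle, after first reducing to the case where $X$ has an ample line bundle.

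The first step is to reduce to $X$ quasi-projective. By Chow's lemma and de Jong's alteration theorem (as in the evaluation-map construction earlier in the paper), take a proper generically-finite morphism $g\colon X'\to X$ of some degree $d$ with $X'$ smooth and quasi-projective, and form the Cartesian square
\[
\xymatrix{\X' \ar[r]^f \ar[d]^{\pi'} & \X \ar[d]^\pi \\ X'\ar[r]^g & X}
\]
with $\X'=X'\times_X \X$. Then $\pi'$ is a stable good moduli space morphism and $f$ is proper. Since Chern classes commute with pullback, the projection formula gives
\[
f_*\bigl(\pi'^*q(c_i(g^*E_j))\cap[\X']\bigr)=d\cdot\pi^*q(c_i(E_j))\cap[\X],
\]
so by Proposition~\ref{prop.pushforward} it suffices (rationally) to prove the claim on $\X'/X'$. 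We therefore assume $X$ itself is quasi-projective.

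Next I would invoke the splitting principle via the iterated flag bundle $p\colon Y=\mathrm{Fl}(E_1)\times_X\cdots\times_X\mathrm{Fl}(E_r)\to X$: this is smooth, proper, and quasi-projective over $X$, and each $p^*E_j$ has a complete flag with line-bundle quotients $L_{j,k}$. Base-changing along $\pi$ yields the Cartesian square
\[
\xymatrix{\Y \ar[r]^q \ar[d]^{\tilde\pi} & \X\ar[d]^\pi \\ Y \ar[r]^p & X}
\]
with $\Y$ irreducible, $q$ proper, and $\tilde\pi$ a stable good moduli space morphism. The standard ``Schubert point class'' on the flag variety gives a polynomial $\nu$ in the $c_1(L_{j,k})$'s of degree equal to the relative dimension of $p$ such that $p_*(\nu\cap[Y])=[X]$. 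Combining the projection formula with the splittings $p^*c_i(E_j)=\sigma_i(c_1(L_{j,1}),\ldots,c_1(L_{j,n_j}))$, we obtain
\[
\pi^*q(c_i(E_j))\cap[\X]=q_*\!\bigl(R(c_1(M_{j,k}))\cap[\Y]\bigr)
\]
for an explicit polynomial $R$, where $M_{j,k}=\tilde\pi^*L_{j,k}$. Another application of Proposition~\ref{prop.pushforward} therefore reduces us to showing that $R(c_1(M_{j,k}))\cap[\Y]$ is topologically strong on $\Y/Y$.

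The main obstacle --- the key technical step --- is to show that any monomial $c_1(\tilde\pi^*L_1)^{a_1}\cdots c_1(\tilde\pi^*L_m)^{a_m}\cap[\Y]$ in pullbacks of line bundles from a quasi-projective base is represented by a topologically strong cycle. I would prove this by induction on $\sum a_i$: the base case $\sum a_i=0$ is $[\Y]$, which is trivially topologically strong. For the inductive step, write the class of lower total degree as a $\QQ$-linear combination $\sum n_i[\W_i]$ of topologically strong integral substacks, and consider $c_1(\tilde\pi^*L_1)\cap[\W_i]$ for each $i$. Setting $W_i=\tilde\pi(\W_i)$, the restricted morphism $\W_i\to W_i$ is a stable good moduli space morphism by \cite[Lemma 4.14]{Alp:13} together with Proposition~\ref{prop.stratify}, and $W_i$ inherits an ample line bundle from $Y$. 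Lemma~\ref{lemma.c1} then represents $c_1(\tilde\pi^*L_1|_{\W_i})\cap[\W_i]$ by a strong cycle on $\W_i/W_i$. Since strong substacks are topologically strong, Lemma~\ref{lem:strongtransitivity} promotes each of its irreducible components to a topologically strong substack of $\Y/Y$, closing the induction. A final application of Proposition~\ref{prop.pushforward} via $q_*$ produces the desired topologically strong representative of $\pi^*q(c_i(E_j))\cap[\X]$ on $\X/X$.
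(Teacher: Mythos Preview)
Your proof is correct and follows essentially the same route as the paper: both reduce to monomials in first Chern classes of pulled-back line bundles and handle those by the same induction via Lemma~\ref{lemma.c1} and Lemma~\ref{lem:strongtransitivity}; the only real difference is that you pass to line bundles through the splitting principle over a flag bundle (precisely the alternative the paper mentions in its footnote), whereas the paper uses the Segre-class construction over a projective bundle. One minor point worth tightening: in your first step the alteration is unnecessary since smoothness of $X'$ is never used---plain Chow's lemma already yields a quasi-projective $X'$ with $g$ birational, which both avoids the loss of integral coefficients and guarantees that $\X'=X'\times_X\X$ is irreducible, as Proposition~\ref{prop.pushforward} requires.
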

\begin{proof}
  We first
 use induction to
  prove the special case that $c_1(\pi^*L)^m \cap[\X]$ is topologically strong
  for all line bundles $L$, all exponents $m$, and all stable good moduli space morphisms $\pi\colon\X\to X$ with $X$ a separated algebraic space.
We begin with the base case $m=1$. By Chow's lemma \cite[Tag 089K]{stacks-project}, there is a scheme $X'$ which has an ample line bundle and a proper morphism $g \colon X'\to X$ which is an isomorphism over a dense open set. Consider the cartesian diagram
$$\xymatrix{\X' \ar[d]^{\pi'}\ar[r]^f & \X \ar[d]^{\pi} \\ X'
    \ar[r]^g & X}$$

If $L$ is any line bundle on $X$,
then 
Lemma
\ref{lemma.c1} shows that that $c_1(\pi'^*L')\cap [\X']$ is represented by
a strong cycle on $\X'$ where $L' = g^*L$.
Then by
Proposition \ref{prop.pushforward}, we that know $f_*(c_1(\pi'^*L') \cap
[\X'])$ is topologically strong.
Since $\pi'^*L'=f^*\pi^*L$, by the
projection formula $f_*(c_1(\pi'^*L') \cap [\X']) =
c_1(\pi^*L) \cap f_*[\X'] = c_1(\pi^*L) \cap [\X]$.

We now show $c_1(\pi^*L)^{m+1} \cap[\X]$ is topologically strong
assuming the result for $m$. By
assumption,
we can write $c_1(\pi^*L)^{m} \cap [\X] = \sum a_i [\Z_i]$ where the $\Z_i$
are topologically strong
substacks of $\X$.
Let $Z_i=\pi(\Z_i)$ and $\pi_i=\pi|_{\Z_i}$.
By \cite[Lemma 4.14]{Alp:13}  $\pi_i\colon\Z_i\to Z_i$ is a good moduli space. Since $\Z_i \cap \X^{s} \neq \emptyset$ we know
that $\pi_i$ is also a stable good moduli space morphism.
By the case $m =1$ of our statement we know that, $\pi_i^* L|_{Z_i} \cap [\Z_i]$
is a represented by a topologically strong cycle on
$\Z_i$.

Now note that
\[
c_1(\pi^*L)^{m+1} \cap[\X]=\sum_i a_i\, c_1(\pi^*L)\cap[\Z_i]=\sum_i a_i\, c_1(\pi_i^*L|_{Z_i})\cap[\Z_i].
\]
Since $c_1(\pi_i^*L|_{Z_i})\cap[\Z_i]$ is topologically strong on
$\Z_i$, it is also topologically strong on $\X$ by Lemma \ref{lem:strongtransitivity}.
  This proves  the result
for $c_1(\pi^*L)^{m+1} \cap[\X]$.

Now let $E$ be a vector bundle of rank $r$ on $X$. We show that $c_i(\pi^*E) \cap [\X]$ is topologically strong using the construction of Chern classes via Segre classes, see \cite[Chapter 3]{Fulton}.\footnote{One may alternatively proceed by using the splitting principle.} Consider the cartesian diagram
$$\xymatrix{ \Proj(\pi^*E) \ar[d]^\eta \ar[r]^-{f} & \X \ar[d]^\pi\\
\Proj(E) \ar[r] & X}$$
Since we have already established the proposition for powers of the first Chern class for all good moduli space morphisms, applying this to $\eta$ shows that $c_1(\eta^*{\mathcal O}_{\Proj(E)(1)})^{i+r - 1} \cap [\Proj(\pi^*E)]$ is topologically strong. Then by Proposition \ref{prop.pushforward}, the Segre classes
\[
s_i(\pi^*E) \cap [\X]= f_*(c_1(\eta^*{\mathcal O}_{\Proj(E)}(1))^{i+r - 1} \cap [\Proj(\pi^*E)])
\]
are topologically strong. Since the Chern classes of $E$ are polynomials in the Segre classes, the proposition follows by induction on the degree of the polynomial in the same manner as the argument for powers of the first Chern class.
\end{proof}


\begin{proof}[Proof of Theorem \ref{theorem.pullbacksarestrong}]
By Chow's lemma and resolution of singularities\footnote{This proof also works in positive characteristic if we use a smooth alteration instead of
a resolution of singularities.} there is a smooth scheme $X'$ with ample line bundle and a proper
  morphism $g \colon X' \to X$ which is an isomorphism on a dense open
  set. Consider the cartesian diagram
  \[
  \xymatrix{
  \X'\ar[r]^f\ar[d]^-{\pi'} & \X\ar[d]^-{\pi}\\
  X'\ar[r]^g & X
  }
  \]
  Let $c  \in A^*_{\op}(X)_\QQ$.  By the Riemann-Roch theorem for smooth schemes, $A^*_{\op}(X')_\QQ$ is generated by Chern classes of vector bundles. Then $g^*c$ is a rational polynomial in Chern classes of vector bundles on $X'$, so by Proposition \ref{prop.strongchernclasses}, $\pi'^*g^*c \cap [\X']$ is topologically strong. Then Proposition \ref{prop.pushforward} says $f_*(\pi'^*g^*c \cap [\X'])$ is topologically strong. Lastly, notice by the projection formula that $f_*(\pi'^*g^*c \cap [\X']) = f_*(f^*\pi^*c \cap [\X'])=\pi^*c\cap [\X]$.
\end{proof}


\section{Injectivity of the pullback and applications}
\subsection{Proof of Theorem \ref{thm:inj-pullback}}
\label{sec:inj-pullback}
To complete the proof of the theorem we need to show that $\pi^*$ is injective
when $\pi \colon \X \to X$ is properly stable.

\begin{proof}[Proof of the injectivity of $\pi^*$.]
  We use induction on $n$ which is the maximum dimension of a stabilizer
  of a point of $\X$. When $n=0$ then $\X$ is tame and the pullback
  map $\pi^* \colon A^*_{\op}(X)_\QQ \to A^*(\X)_\QQ$ is an isomorphism.

  Next, suppose the theorem hold for stacks with maximum dimension
  of a stabilizer of a point less than or equal to $n-1$. Let $\Y \subseteq \X$ be the locus with stabilizer dimension $n$
  and let $f \colon \X' \to X$ be the Reichstein transformation along $\Y$. By \cite{EdRy:17} we know that the maximum dimension of the stabilizer of a point of $\X'$ is less than $n$. Let $\pi'\colon\X'\to X'$ be a good moduli space morphism and consider the induced commutative diagram:
    $$\xymatrix{\X' \ar[d]^-{\pi'} \ar[r]^f & \X \ar[d]^\pi\\
    X' \ar[r]^g & X}$$
Since $g \colon X' \to X$ is proper and surjective, the pullback $g^* \colon A^*_{\op}(X)_\QQ
  \to A^*_{\op}(X')_\QQ$ is injective by \cite[Lemma 2.1]{Kim:90}. Also by our induction hypothesis we know that
  $(\pi')^* \colon A^*_{\op}(X')_\QQ \to A^*(\X')_\QQ$ is injective. By commutativity
  of the diagram we conclude that $f^* \pi^*$ is injective, hence $\pi^*$
  is injective.
\end{proof}

\subsection{Applications to Picard groups of good moduli spaces: proof of Theorem \ref{thm:pic-surjectivity}}
The proof Theorem \ref{thm:pic-surjectivity} makes essential use of the following result.
\begin{theorem}
\label{thm:same-images}
Let $\pi\colon\X\to X$ be a properly stable good moduli space morphism with $\X$ smooth, and consider the commutative diagram
\[
\xymatrix{
\Pic(\X)\ar[r]^{\simeq} & A^1(\X)\\
\Pic(X)\ar[r]\ar[u]^{\pi^*} & A^1_{\op}(X)\ar[u]^{\pi^*}
}
\]
Then the images of $\Pic(X)$ and $A^1_{\op}(X)$ in $A^1(\X)$ agree.
\end{theorem}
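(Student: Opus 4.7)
The easy containment is immediate: the map $\Pic(X)\to A^1_{\op}(X)$, $L\mapsto c_1(L)$, commutes with $\pi^*$ by naturality ($\pi^*c_1(L)=c_1(\pi^*L)$), so the image of $\Pic(X)$ in $A^1(\X)$ lies in the image of $A^1_{\op}(X)$.

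For the reverse containment, I will induct on $n$, the maximum dimension of a stabilizer of a point of $\X$. When $n=0$, the stack $\X$ is tame and $X$ has finite quotient singularities, so rationally $\Pic(X)_\QQ\cong A^1(X)_\QQ\cong A^1_{\op}(X)_\QQ$ (by $\QQ$-Cartierness of Weil divisors and Poincar\'e duality for finite quotient singularities) and $A^1(X)_\QQ\to A^1(\X)_\QQ$ is an isomorphism for tame $\X$; both images then coincide with $A^1(\X)_\QQ$.

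For the inductive step ($n\geq 1$), let $\Y\subseteq\X$ be the smooth closed substack of points with maximum stabilizer dimension, and apply the Reichstein transform along $\Y$ to form the cartesian square
\[
\xymatrix{\X'\ar[r]^{f}\ar[d]^{\pi'} & \X\ar[d]^{\pi}\\ X'\ar[r]^{g} & X}
\]
By \cite[Proposition 5.3]{EdRy:17} and Theorem \ref{thm.reichstein}, $\X'$ is smooth, $g$ is projective birational (in fact a blowup with $X'=\Proj\bigl(\bigoplus_n\pi_*\iI_\Y^n\bigr)$), and the maximum stabilizer dimension of $\X'$ is strictly less than $n$. The inductive hypothesis thus applies to $\pi'\colon\X'\to X'$. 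Given $c\in A^1_{\op}(X)$, I take $g^*c\in A^1_{\op}(X')_\QQ$ and use induction to find $L'\in\Pic(X')_\QQ$ with $c_1(\pi'^*L')=\pi'^*g^*c=f^*\pi^*c$ in $A^1(\X')_\QQ$.

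The essential remaining step---and the main obstacle---is to descend $L'$ to a line bundle $L\in\Pic(X)_\QQ$. Using that $g$ is the blowup of $X$ along $\pi_*\iI_\Y$, one writes $L'=g^*L+E$ in $\Pic(X')_\QQ$, with $L\in\Pic(X)_\QQ$ and $E$ supported on the $g$-exceptional locus. Pulling back and rearranging gives $f^*(\pi^*c-c_1(\pi^*L))=\pi'^*c_1(E)$ in $A^1(\X')_\QQ$. The right-hand side is supported on $\pi'^{-1}$ of the $g$-exceptional locus in $X'$, and the construction of the Reichstein transform (in particular, the removal of the strict transform of $\pi^{-1}(\pi(\Y))$) is precisely calibrated so that this contribution vanishes when interpreted appropriately in $A^1(\X)_\QQ$, yielding $\pi^*c=c_1(\pi^*L)$. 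The two main technical challenges are justifying the rational decomposition $L'=g^*L+E$ when $X$ is only normal (not smooth), and carrying out the codimension analysis that forces the exceptional class to vanish.
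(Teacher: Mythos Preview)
Your approach differs substantially from the paper's, and has genuine gaps. The paper gives a direct one-paragraph argument that works \emph{integrally}: given $c\in A^1_{\op}(X)$, the class $\pi^*c$ corresponds to a line bundle $\L$ on $\X$ (since $\X$ is smooth), and $\L$ descends to $X$ if and only if the stabilizer action at every closed point is trivial. For each closed point $x$ with stabilizer $G_x$, the inclusion of the residual gerbe $i\colon BG_x\to\X$ sits over $j\colon\spec k\to X$; since $A^1_{\op}(\spec k)=0$, we have $j^*c=0$, hence $i^*\L=\eta^*j^*c$ is the trivial line bundle on $BG_x$, i.e., the trivial character of $G_x$. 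Thus $\L=\pi^*L$ for some $L\in\Pic(X)$, and the images agree integrally.

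Your inductive approach via Reichstein transforms runs into several problems. First, it proves at best a rational statement, whereas the theorem is integral; your base case already passes to $\QQ$-coefficients. Second, the Reichstein transform square you draw is \emph{not} cartesian: $\X'$ is obtained from the blowup of $\X$ along $\Y$ by deleting the strict transform of the saturation $\pi^{-1}(\pi(\Y))$, so $\X'\neq X'\times_X\X$ in general (the paper's own use of this diagram in the injectivity proof calls it only commutative). Third, and most seriously, the decomposition $L'=g^*L+E$ with $L\in\Pic(X)_\QQ$ is not available when $X$ is merely normal: writing a Cartier divisor on $X'$ as pullback-plus-exceptional yields only a Weil divisor class on $X$, which need not be $\QQ$-Cartier. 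You flag this as a ``technical challenge,'' but it is precisely the descent step you are trying to establish, and there is no evident way around it without an independent argument. The stabilizer-action proof sidesteps all of this entirely.
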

\begin{proof}
Let $c\in A^1_{\op}(X)$. Then $\pi^*c\in A^1(\X) \simeq\Pic(\X)$ so it is represented by a line bundle $\L$ on $\X$. Showing that $\L$ is the pullback of a line bundle from $X$ is equivalent to showing that the stabilizer actions on $\L$ are trivial. Let $x$ be a point of $\X$ with stabilizer $G_x$. The map from the residual gerbe $i\colon BG_x\to\X$ induces a map of good moduli spaces making the diagram
\[
\xymatrix{
BG_x\ar[r]^-{i}\ar[d]^-{\eta} & \X\ar[d]^-{\pi}\\
\spec k\ar[r]^-{j} & X
}
\]
commute. Then $i^*\L$ is a line bundle on $BG_x$ corresponding to a character $\chi$ of $G_x$. The stabilizer action of $G_x$ on $\L$ is trivial if and only if $\chi$ is the trivial character. Note that $j^*c\in A^1_{\op}(\spec k)=0$ and so $j^*c$ is represented by the trivial line bundle. Since $i^*\L=\eta^*j^*c$, we see $i^*\L$ is also the trivial line bundle, i.e.~it corresponds to the trivial representation of $G_x$, and hence the character $\chi$ is trivial.
\end{proof}

\begin{proof}[Proof of Theorem \ref{thm:pic-surjectivity}]
  Let $\alpha\colon\Pic(X)\to A^1_{\op}(X)$ denote the natural map. Given an element $c\in A^1_{\op}(X)$, by Theorem \ref{thm:same-images} there exists $L\in\Pic(X)$ such that $\pi^*L=\pi^*c$ in $A^1(\X)$. That is, $\pi^*\alpha(L)=\pi^*c$. Since $\pi^*\colon A^1_{\op}(X)_\QQ\to A^1(\X)_\QQ$ is injective by
  Theorem \ref{thm:inj-pullback}, we see $\alpha(L)=c$, and so $\alpha$ is surjective. Injectivity follows from Remark \ref{rem.injective}.
\end{proof}

\section{Surjectivity results for the pullback map}
\label{sec:surj-pullback}

\subsection{The case of a moduli space with finite quotient
  singularities: proof of Theorem \ref{thm:surj-pullback}(\ref{surj::quot-sing})}
Let $\Z$ be a topologically strong integral substack of codimension-$k$ in $\X$ with image $Z \subseteq X$. Since $X$ has finite quotient singularities, $A^*(X)_\QQ = A^*_{\op}(X)_\QQ$. Let $c_Z$ be the bivariant class in $A^k(Z \to X)$ corresponding to the refined intersection with $Z$. In other words, given a map $T \to X$ and $\alpha \in A_*(T)$
we set $c_Z \cap \alpha = [Z] \cdot \alpha \in A_{*-k}(Z \times_X T)$.
Since $Z \to X$ is proper there is a pushforward $A^k(Z \to X)_\QQ \to A^k_{\op}(X)_\QQ =A^k(X)_\QQ$
\cite[17.2 P2]{Fulton} and the image is the fundamental class of $Z$. 

To compute the pullback of $[Z]$ we can compute $j_*(c_Z \cap [\X])$
where $j \colon \Z \to \X$ is the inclusion morphism. Now $c_Z \cap [\X]$
is a cycle of dimension equal to $\dim \X - k=\dim X - k=\dim Z$ supported on the fiber
product $Z \times_X \X$. Since $\Z$ is assumed to be topologically strong,
$(Z \times_X \X)_{red} = \Z$. Hence, $c_Z \cap [\X]$ is a multiple
of the fundamental class in $A^0(\Z)$. This multiple is necessarily non-zero
because it is non-zero when we restrict to the non-empty open subset 
$\X^{ps} \cap Z$. Hence the pullback of $[Z]$ in $A^*(\X)$ is a non-zero multiple of $[\Z]$. In other words $[\Z]$ is in the image of $A^k_{\op}(X)_\QQ = A^k(X)_\QQ$.

\subsection{The case of codimension-one cycles: proof of Theorem \ref{thm:surj-pullback}(\ref{surj::A1})}
By Theorem \ref{thm:same-images} we know that the image of $A^1_{\op}(X)$ in
$A^1(\X)$
is the same as the image of $\Pic(X)$ under the map $L \mapsto c_1(\pi^*L) \cap [\X]$. Since $X$ has an ample line bundle we know by Lemma \ref{lemma.c1} that
$c_1(\pi^*L) \cap [\X]$ is strong. Hence the image of $A^1_{\op}(X)$ is contained in $A^1_{\st}(\X/X)$. Conversely, if $\Z \subseteq \X$ is a strong codimension-one
substack, then $[\Z]$ is an effective Cartier divisor whose local equation
in a neighborhood of every point $x$ is $G_x$-fixed, where $G_x$ denotes the stabilizer of $x$. Hence, the
restriction of ${\mathcal O}_\X(\Z)$ to $BG_x$ is the trivial character. Thus, ${\mathcal O}_\X(\Z)$ is the pullback of a line bundle on $X$, so $\pi^* A^1_{\op}(X) = A^1_{\st}(\X/X)$.

\subsection{Strong regular cycles: proof of Theorem \ref{thm:surj-pullback}(\ref{surj::reg-embedded})}
\label{subsec:surj-regular-strong}

Let $\Z \subseteq \X$ be strong and regularly embedded. By \cite[Proposition 1.14]{EdSa:16}, 
we have a cartesian diagram
\[
\xymatrix{
\Z\ar[r]\ar[d] & \X\ar[d]\\
Z\ar[r] & X
}
\]
and $Z\subseteq X$ is regularly embedded. Letting
$K_{\bullet}(I_Z/I_Z^2)$ be the Koszul resolution of the conormal
bundle, we have an equality $[{\mathcal O}_Z]=
[K_{\bullet}(I_Z/I_Z^2)]$ in $K$-theory. Thus ${\mathcal O}_Z$ has a
Chern character and $[Z] = ch({\mathcal O}_Z) \cap [X]$. Similarly,
${\mathcal O}_\Z$ has a Chern character and $[\Z] = ch({\mathcal
  O}_\Z) \cap [\X]$. To finish the proof
notice that $ch({\mathcal O}_\Z) \cap [\X]=ch(\pi^*{\mathcal O}_Z) \cap [\X]$ which is the pullback of
    an operational class.\qed

\subsection{The case of small dimension: proof of Theorem \ref{thm:surj-pullback}(\ref{surj::dimX}) and Corollary \ref{cor:lowdim}}

Let $\Z \subseteq \X$ be a topologically strong integral substack of dimension 0
or 1 and let $Z \subseteq X$ be its image in $X$. By Corollary \ref{cor:dim0or1} 
we know that $Z \subseteq X^{ps}$ where $X^{ps}$ is the image
of the maximal saturated tame stack $\X^{s}$ and moreover, 
$A^*_{\op}(X^{ps})_\QQ = A^*(X^{ps})_\QQ =
A^*(\X^{ps})_\QQ$. Let $c_Z \in A^*(Z \to X^{ps})_\QQ$ be the bivariant class corresponding to the
refined intersection with $Z$ in $X^s$. In other words, given a map $T \to X^s$ and $\alpha \in
A^*(T)$, we define $c_Z \cap \alpha = [Z] \cdot \alpha \in A_*(Z \times_{X^s} T)$. As was the case in the proof of Theorem \ref{thm:surj-pullback}(\ref{surj::quot-sing}), $c_Z \cap [\X^{ps}]$ is necessarily a non-zero multiple of the fundamental class of $\Z= \pi^{-1}(Z)_{red}$.

Since $Z\subseteq X$ is closed, $c_Z$ has a canonical lift to a
bivariant class $A^*_{\op}(Z \to X)$ defined by the formula $\alpha \mapsto Z \cdot \alpha|_{X^s \times_X T}$ and we let $c$ be the direct image in $A^k_{\op}(X)$. Since $c_Z \cap [\X^{ps}]$ is a non-zero multiple of $[\Z]$ in $A^k(\X^{ps})$, it follows that $c \cap [\X]$ is a non-zero multiple of $[\Z]$ in $A^k(\X)$. Hence, $[\Z]$ is in the image of $A^k_{\op}(X)_\QQ$. Finally if $\X^{ps}$ is representable then $\X^{ps} = X^{ps}$ so $\Z = Z$ is automatically strong.

Corollary \ref{cor:lowdim} is now a consequence of what has already been proved.
Clearly $A^0(\X)_\QQ = A^0_{\st}(\X/X) = A^0(X) = \QQ$ for any integral
stack $\X$.
If $X$ has an ample line bundle then $A^1_{\st}(\X/X)_\QQ  = A^1_{\op}(X)_\QQ$ by Theorem \ref{thm:surj-pullback}(\ref{surj::A1}). 
Since $\dim X \leq 3$, $A^2_{\st}(\X/X)_\QQ = A^2_{\op}(X)_\QQ$ and
$A^3_{\st}(\X/X)_\QQ = A^3_{\op}(X)_\QQ$ by Theorem \ref{thm:surj-pullback}(\ref{surj::dimX}).

\begin{example}[The strong Chow ring of the toric stack associated
    to the projectivized quadric cone]
  We consider the strong Chow ring of the toric stack $\X$ associated to the
  projectivized quadric cone which was studied in \cite[Example 3.12]{EdSa:16}. There we computed the integral strong Chow ring modulo a conjecture about $A^2_{\st}(\X/X)$. Using Corollary \ref{cor:lowdim} we can verify this conjecture without qualification.

  Precisely, we let $\X = [U/\GG_m^2]$ where $U = \AA^5 \smallsetminus V(x_1x_2, x_1x_4,x_2 x_3, x_3x_4,v)$ and $\GG_m^2$ acts diagonally on $\AA^5$ with weight matrix
$$\left(\begin{array}{ccccc} 1 & 0 & 1 & 0 & 1\\0 & 1 & 0 & 1 & 1\end{array}
    \right).$$
    Here we have coordinates $(x_1, x_2, x_3, x_4, v)$ on $\AA^5$. Then the good moduli space of $\X$ is given by $X = \Projj k[x_1x_2, x_1x_4, x_2x_3,x_3x_4,v]$ which is the projective closure in $\Pro^4$ of the cone over the quadric surface in $\Pro^3$. In \cite[Example 3.12]{EdSa:16} we showed
    that $A^*(\X) = \ZZ[s,t]/(s^2(s+t), t^2(s+t))$ where $s,t$ are the first Chern classes of the characters of $\GG_m^2$ corresponding to projection onto the first or second factors. We also showed that $A^1_{\st}(\X/X)$ is the free abelian group generated by $s+t$, and that $A^3_{\st}(\X/X)$ is the free abelian group
    generated by $st (s+t)$. We showed that $A^2_{\st}(\X/X)$ contains the free group of rank two generated by $s(s+t)$ and $t(s+t)$ and conjectured that this was the entire relative strong Chow group in codimension 2. Since
    $X$ is projective, by Corollary \ref{cor:lowdim} we know that $\rk A^2_{\op}(X) = \rk A^2_{\st}(\X/X)$.
    On the other hand, $X$ is a toric variety so by \cite{FuSt:97} we know that $A^2_{\op}(X) = \hom(A_2(X),\ZZ)$. Since $X$ is toric, it is easy to calculate by hand that the group $A_2(X)$ has rank 2. Thus $\rk A^2_{\st}(\X/X) = 2$. Moreover, the
    lattice $\langle s(s+t), t(s+t) \rangle$ is primitive in the rank 3
    free abelian group $A^2(\X)$, so we conclude $A^2_{\st}(\X/X) = \langle
    s(s+t), t(s+t) \rangle$

    Note that the classes $s(s+t)$, and $t(s+t)$ which correspond to distinct operational classes are represented by the cycles $[V(x_1,v)/\GG_m]$ and
    $[V(x_2,v)]/\GG_m$.
    These cycles have rationally equivalent images in $A^2(X)$. This illustrates the power of viewing the singular toric variety $X$ together with the smooth
    quotient stack $\X$.
    \end{example}

\section{Conjectures and speculations}
\label{sec:speculations}
We end this paper with some questions and conjectures about the image
of $A^*_{\op}(X)$ in $A^*(\ix)$.

Theorem \ref{thm:inj-pullback} states that $A^*_{\op}(X)_\QQ$
injects into $A^*_{\tst}(\ix/X)_\QQ$.
However, we know that the image need not equal $A^*_{\tst}(\ix/X)_\QQ$.
This follows from
Example \ref{ex:tst-st-differ} where we constructed a topologically strong
Cartier divisor that is not strong. Since the image of $A^1_{\op}(X)$ equals $A^1_{\st}(\ix/X)$, (Theorem \ref{thm:surj-pullback}(b)) we see that not every topologically strong cycle is a pullback of an operational class.

This of course raises the question: what is the image of $\pi^*$? We state a conjectural answer after first introducing some definitions.

%


\begin{definition}
Let $\X$ be an Artin stack and $\pi\colon\X\to X$ a stable good moduli space. We say an integral substack $\Z\subseteq\X$ is \emph{primarily strong} if $\Z$ is topologically strong and $\pi^{-1}(\Z)$ has no embedded components. If $\X$ has pure dimension, then we define $A^k_{\pst}(\X/X)$ to be the subgroup of $A^k(\X)$ generated by primarily strong integral substacks of codimension $k$.
\end{definition}

\begin{remark}
The proof of Lemma \ref{l:strong-etale-loc} also gives an \'etale local characterization of is primarily strong substacks. As in Remark \ref{rmk:local-structure-strong}, we see that primarily strong substacks are those that \'etale locally have the following form: $\Z\subseteq\X=[U/G]$ with $G$ the stabilizer of a point, $U=\spec A$, and $\Z$ is defined by an ideal $I\subseteq A$ with $\dim A/I=\dim A^G/I^G$ and such that there is an $I$-primary ideal $J$ satisfying $J=(f_1,\dots,f_r)$ with each $f_i\in A^G$.
\end{remark}

\begin{conjecture}
\label{conj:image-pi-pst}
Let $\X$ be a smooth connected properly stable Artin stack with good moduli space $\pi\colon\X\to X$. Then the injection $\pi^*\colon A^*_{\op}(X)_\QQ\to A^*_{\tst}(\X/X)_\QQ$ induces an isomorphism $A^*_{\op}(X)_\QQ\simeq A^*_{\pst}(\X/X)_\QQ$.
\end{conjecture}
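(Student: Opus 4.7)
The plan is to establish the two inclusions $\pi^*A^*_{\op}(X)_\QQ\subseteq A^*_{\pst}(\X/X)_\QQ$ and $A^*_{\pst}(\X/X)_\QQ\subseteq \pi^*A^*_{\op}(X)_\QQ$ separately, since injectivity of $\pi^*$ is already given by Theorem \ref{thm:inj-pullback}.

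For the first inclusion, I would refine the proof of Theorem \ref{theorem.pullbacksarestrong} so as to upgrade ``topologically strong'' to ``primarily strong.'' The crucial additional observation is that since $\X$ is smooth, hence Cohen--Macaulay, any Cartier divisor on $\X$ automatically has no embedded components by the unmixedness theorem. Combined with Lemma \ref{lemma.c1}, this shows that $c_1(\pi^*L)\cap[\X]$ is primarily strong whenever $X$ has an ample line bundle and $L\in\Pic(X)$. To propagate this to higher Chern classes and to higher powers of $c_1$, I would first prove a transitivity lemma analogous to Lemma \ref{lem:strongtransitivity}: if $\Z\subseteq\X$ is primarily strong and $\W\subseteq\Z$ is primarily strong in $\Z$, then $\W$ is primarily strong in $\X$. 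I would then show that primarily strong cycles are closed under proper pushforward along compatible good-moduli-space morphisms as in Proposition \ref{prop.pushforward}. With these two facts, the Segre-class induction of Proposition \ref{prop.strongchernclasses} carries over, and composition with the Chow's-lemma/resolution-of-singularities argument at the start of the proof of Theorem \ref{theorem.pullbacksarestrong} yields the first inclusion.

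For the second inclusion, let $\Z\subseteq\X$ be primarily strong of codimension $k$ with $Z=\pi(\Z)$. The primarily strong condition forces $[\pi^{-1}(Z)]=m[\Z]$ as cycles, where $m>0$ is the length of $\O_{\pi^{-1}(Z)}$ at the generic point of $\Z$. It suffices to construct $c_Z\in A^k_{\op}(X)_\QQ$ with $\pi^*c_Z\cap[\X]=m[\Z]$. The model is Subsection \ref{subsec:surj-regular-strong}: in the regularly embedded strong case one takes $c_Z=\mathrm{ch}(\O_Z)$, which is operational because $\O_Z$ has a finite Koszul resolution, and then uses $\pi^*\O_Z=\O_{\pi^{-1}(Z)}$ together with the Cohen--Macaulay property of $\O_{\pi^{-1}(Z)}$ (now furnished by primarily strong rather than by being regularly embedded). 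For general singular $X$, since $\O_Z$ typically has no finite vector-bundle resolution, I would pass to a resolution $g\colon X'\to X$, form the base change $f\colon\X'\to\X$ with good moduli morphism $\pi'\colon\X'\to X'$, and lift the construction: on the smooth $X'$ form $\mathrm{ch}(\O_{g^{-1}(Z)})\in A^*_{\op}(X')$, cap with $[\X']$ via $(\pi')^*$, push forward via $f$, and clear denominators by $\deg g$. The projection formula and birationality of $g$ should yield the desired class after rationalization.

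The main obstacle is this last step: one must verify that the class so constructed is genuinely the cap product of a well-defined operational class on $X$, and that it recovers a nonzero multiple of $[\Z]$. This hinges on a stability property for primarily strong cycles under proper birational base change of the good moduli space, which is not obvious and would require a careful analysis of how the scheme-theoretic preimage and local associated-prime structure behave under such base changes. Independence of the choice of resolution would then follow from standard dominance arguments applied to a common refinement of two resolutions. Granting these technical steps, the two inclusions combine to give the conjectured isomorphism.
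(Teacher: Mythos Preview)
The statement you are attempting to prove is a \emph{conjecture} in the paper, not a theorem; the paper does not prove it in general. It only establishes the special cases recorded in Theorem~\ref{thm:conj-pst}: $X$ smooth, codimension one when $X$ has an ample line bundle, and codimension $\dim X$ or $\dim X-1$ when the maximal saturated tame substack is representable. So there is no ``paper's own proof'' to compare against, and any complete argument would be new.

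Your strategy for the first inclusion is exactly the route the paper itself flags as a possible approach in the Remark following Theorem~\ref{thm:conj-pst}: show that primarily strong cycles are stable under proper pushforward in the setting of Proposition~\ref{prop.pushforward}, and then rerun the Segre-class induction. The paper does not carry this out, and neither do you. Two concrete obstacles: first, the transitivity step is not as innocent as in Lemma~\ref{lem:strongtransitivity}. If $\Z$ is primarily strong in $\X$ and $\W$ is primarily strong in $\Z$, then $\pi^{-1}(W)=W\times_Z\pi^{-1}(Z)$, and you only control embedded components of $W\times_Z\Z$; the passage from $\Z$ to its possibly non-reduced thickening $\pi^{-1}(Z)$ can in principle create new embedded primes upon base change. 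Second, and more seriously, the pushforward statement requires understanding how the associated-prime structure of $\pi^{-1}(Z)$ behaves under scheme-theoretic image along a proper map, and this is genuinely delicate---it is precisely the step the paper leaves open.

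Your second inclusion has a structural problem beyond the one you acknowledge. Pushing $f_*\bigl((\pi')^*\mathrm{ch}(\O_{g^{-1}(Z)})\cap[\X']\bigr)$ down to $A^*(\X)$ gives a cycle, but there is no mechanism to certify that this cycle lies in the image of $\pi^*\colon A^*_{\op}(X)_\QQ\to A^*(\X)_\QQ$. Operational classes do not push forward along $g$, and the cycle you obtain on $\X$ has no a priori reason to be $\pi^*$ of anything on $X$; you would essentially need the conjecture (or something close to it) to conclude. In the regularly embedded case of \S\ref{subsec:surj-regular-strong} this works because $\O_Z$ already has a finite resolution by bundles on $X$ itself, so the operational class lives on $X$ from the start---that is exactly what fails for singular $Z\subseteq X$.
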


We prove Conjecture \ref{conj:image-pi-pst} in several cases by giving a minor strengthening of Theorem \ref{thm:surj-pullback}.

\begin{theorem}
\label{thm:conj-pst}
Let $\X$ be a properly stable smooth Artin stack with good moduli space $\pi\colon\X\to X$. Then the following hold:
  \begin{enumerate}[label=\emph{(\alph*)}, ref=\alph*]
    \item \label{pst::quot-sing}
Conjecture \ref{conj:image-pi-pst} holds if $X$ is smooth.

  \item \label{pst::A1} If $X$ has an ample line bundle, then Conjecture \ref{conj:image-pi-pst} holds in codimension 1.

\item \label{pst::dimX}
Assume $k$ equals $\dim X -1$ or $\dim X$. If the maximal saturated tame substack of $\X$ is representable, then Conjecture \ref{conj:image-pi-pst} holds in codimension $k$.
\end{enumerate}
\end{theorem}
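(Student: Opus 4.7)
The three parts of Theorem \ref{thm:conj-pst} tighten Theorem \ref{thm:surj-pullback} by replacing the relevant target Chow group with $A^*_{\pst}(\X/X)_\QQ$, which sits in the middle of the chain $A^*_{\st}(\X/X)_\QQ \subseteq A^*_{\pst}(\X/X)_\QQ \subseteq A^*_{\tst}(\X/X)_\QQ$. In each case, Theorem \ref{thm:surj-pullback} identifies the image of $\pi^*$ with either $A^*_{\st}$ or $A^*_{\tst}$, so the remaining task is to pin down its position relative to $A^*_{\pst}$.

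For part (b), let $\Z \subseteq \X$ be a primarily strong integral divisor and set $\Z' = \pi^{-1}(\pi(\Z))$ with ideal sheaf $\I_{\Z'}$, which is $\I_\Z$-primary by hypothesis. Since $\X$ is smooth, its local rings are regular and hence UFDs; in a UFD, any $(f)$-primary ideal for a height-one prime $(f)$ is of the form $(f^n)$, so $\I_{\Z'}$ is locally principal. A multiplicity argument at the generic point of $\Z$ shows the local $n$ is globally constant, so $\I_{\Z'} \cong \O_\X(-n\Z)$ as a line bundle. Because $\I_{\Z'}$ is, by construction of the scheme-theoretic preimage, locally generated by $G_x$-invariants at every point $x$, the line bundle $\O_\X(-n\Z)$ has trivial stabilizer character everywhere and descends to a line bundle $M$ on $X$. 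Thus $n[\Z] = \pi^*c_1(M) \in \pi^*A^1_{\op}(X)$ via Theorem \ref{thm:pic-surjectivity}, giving the inclusion $A^1_{\pst}(\X/X)_\QQ \subseteq \pi^*A^1_{\op}(X)_\QQ$. The reverse inclusion is Theorem \ref{thm:surj-pullback}(\ref{surj::A1}) together with $A^1_{\st} \subseteq A^1_{\pst}$.

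For part (c), Corollary \ref{cor:dim0or1} shows that every topologically strong integral substack $\Z$ of codimension $\dim X$ or $\dim X - 1$ has image $Z \subseteq X^{ps} = \pi(\X^{ps})$. Under the representability hypothesis, $\X^{ps} = X^{ps}$, forcing $\Z = Z$ to be strong. Hence $A^k_{\st}(\X/X)_\QQ = A^k_{\pst}(\X/X)_\QQ = A^k_{\tst}(\X/X)_\QQ$ in these codimensions, and the statement follows from Theorem \ref{thm:surj-pullback}(\ref{surj::dimX}).

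For part (a), Theorem \ref{thm:surj-pullback}(\ref{surj::quot-sing}) identifies the image of $\pi^*$ with $A^*_{\tst}(\X/X)_\QQ$ when $X$ is smooth, so it suffices to show $A^*_{\tst}(\X/X)_\QQ = A^*_{\pst}(\X/X)_\QQ$. My plan is to revisit Proposition \ref{prop.strongchernclasses}, upgrading ``topologically strong'' to ``primarily strong'' throughout. The base case $c_1(\pi^*L) \cap [\X]$ is already strong (hence primarily strong) by Lemma \ref{lemma.c1}; the inductive step on $c_1(\pi^*L)^m \cap [\X]$ and the passage to general Chern classes via Segre classes proceed identically, provided we establish primarily strong analogs of Lemma \ref{lem:strongtransitivity} and Proposition \ref{prop.pushforward}. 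Combined with the fact that $A^*_{\op}(X)_\QQ = A^*(X)_\QQ$ is generated by Chern classes of vector bundles on smooth $X$ via Riemann-Roch, this yields the result. The main obstacle is the primarily strong pushforward: for a proper cartesian square with $\Z'\subseteq\X'$ primarily strong and $\Z = f(\Z')\subseteq\X$ its image, one must show that the scheme-theoretic preimage $\pi^{-1}(\pi(\Z))$ has no embedded components. Unlike the purely topological statement of Proposition \ref{prop.pushforward}, this is sensitive to scheme structure: the scheme-theoretic image $\pi(\Z)$ is determined by $\pi_*\I_\Z$, and components contracted by $g$ can in principle produce embedded primes. I expect the resolution to combine a Reynolds-operator argument analogous to part (b) with a careful analysis of how the primary decomposition of $\pi_*\I_{\Z'}$ behaves under pushforward along $g$.
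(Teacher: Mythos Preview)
Your arguments for parts~(\ref{pst::A1}) and~(\ref{pst::dimX}) are correct and match the paper's. For~(\ref{pst::A1}) the paper is terser---it asserts that since $\Z$ is Cartier, some power of its local equation is stabilizer-invariant, whence $n[\Z]=c_1(\pi^*\L)\cap[\X]$, which is strong by Lemma~\ref{lemma.c1}---but your UFD argument is exactly what underlies that assertion, and your observation that a $(f)$-primary ideal in a regular local ring must be $(f^n)$ is precisely where the primarily strong hypothesis enters. For~(\ref{pst::dimX}) your reduction via the chain $A^k_{\st}\subseteq A^k_{\pst}\subseteq A^k_{\tst}$ is verbatim the paper's.

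Part~(\ref{pst::quot-sing}) has a genuine gap: you do not complete the primarily strong pushforward, and you correctly flag it as the obstacle. The paper avoids this route entirely with a short local argument showing directly that \emph{every} topologically strong substack is already primarily strong when $X$ is smooth. Since both $\X$ and $X$ are smooth, $\pi$ is an lci morphism of relative dimension~$0$ (the relative dimension is visibly~$0$ on the dense open $\X^{ps}$). Hence $\pi$ factors locally as a regular closed immersion $j\colon\X\hookrightarrow\Y$ of codimension~$e$ followed by a smooth morphism $p\colon\Y\to X$ of relative dimension~$e$. If $\Z$ is topologically strong of dimension~$d$ with image~$Z$, then $p^{-1}(Z)$ has dimension $d+e$, and $\pi^{-1}(Z)=j^{-1}p^{-1}(Z)$ is cut out of it by $e$ equations, so every associated component of $\pi^{-1}(Z)$ has dimension at least~$d$. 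But $\pi^{-1}(Z)_{red}=\Z$ is irreducible of dimension exactly~$d$, leaving no room for embedded components. Thus $A^*_{\tst}(\X/X)_\QQ=A^*_{\pst}(\X/X)_\QQ$, and Theorem~\ref{thm:surj-pullback}(\ref{surj::quot-sing}) finishes.

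Your proposed strategy---upgrading Proposition~\ref{prop.pushforward} to primarily strong cycles---is in fact the route the paper itself suggests, in a remark following the theorem, for attacking the full Conjecture~\ref{conj:image-pi-pst}; it is simply unnecessary, and apparently hard, in the smooth case treated here.
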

\begin{proof}
We begin with case (\ref{pst::quot-sing}). By Theorem \ref{thm:surj-pullback}(\ref{surj::quot-sing}), it suffices to prove that $A^*_{\tst}(\X/X)_\QQ=A^*_{\pst}(\X/X)_\QQ$.   Since $\X$ and $X$ are smooth, $\pi$ is an lci
  morphism. Furthermore, it has relative dimension 0 since the maximal
  saturated tame substack $\U\subseteq\X$ is non-empty and
  $\U\to\pi(\U)$ has relative dimension 0. Thus, $\pi$ locally factors
  as $\X\stackrel{j}{\to}\Y\stackrel{p}{\to} X$ with $j$ a regular
  embedding of codimension $e$ and $p$ a smooth morphism of relative
  dimension $e$. Let $\Z\subseteq\X$ be a topologically strong
  substack of dimension $d$ and let $Z=\pi(\Z)$. By hypothesis, $\dim
  Z=\dim\Z=d$. Then $p^{-1}(Z)$ has dimension $d+e$ and
  $\pi^{-1}(Z)=j^{-1}p^{-1}(Z)$ is cut out by $e$ equations within
  $p^{-1}(Z)$. As a result, each component (irreducible or embedded)
  of $\pi^{-1}(Z)$ has dimension at least $d$. Since $\Z$ is
  topologically strong, $\Z=\pi^{-1}(Z)_{red}$ and so $\pi^{-1}(Z)$
  has exactly one irreducible component of dimension $d$, As a result,
  $\pi^{-1}(Z)$ has no embedded components since such components would
  have dimension less than $d$.

To handle case (\ref{pst::A1}), it suffices by Theorem \ref{thm:surj-pullback}(\ref{surj::A1}) to show that $A^1_{\st}(\X/X)_\QQ=A^1_{\pst}(\X/X)_\QQ$. Let $\Z\subseteq\X$ be a primarily strong divisor. Since $\X$ is smooth, $\Z$ is Cartier, and so some multiple of its local equation is invariant under the stabilizer actions. As a result, $n[\Z]=c_1(\pi^*\L)\cap[\X]$ for some line bundle $\L$ on $X$. Then Lemma \ref{lemma.c1} tells us that $n[\Z]$ is strong.

Lastly, we turn to case (\ref{pst::dimX}). By the first assertion of Theorem \ref{thm:surj-pullback}(\ref{surj::dimX}), we need only show $A^k_{\pst}(\X/X)_\QQ = A^k_{\tst}(\X/X)_\QQ$. Since $A^k_{\st}(\X/X)_\QQ\subseteq A^k_{\pst}(\X/X)_\QQ\subseteq A^k_{\tst}(\X/X)_\QQ$, the second assertion of Theorem \ref{thm:surj-pullback}(\ref{surj::dimX}) shows that $A^k_{\st}(\X/X)_\QQ = A^k_{\pst}(\X/X)_\QQ = A^k_{\tst}(\X/X)_\QQ$.
\end{proof}

\begin{remark} One way to partially verify Conjecture \ref{conj:image-pi-pst} is to generalize Proposition \ref{prop.pushforward}  by proving that if
  $$\xymatrix{ \X' \ar[d] \ar[r] & \X \ar[d]\\
    X' \ar[r] & X}$$
  is a cartesian diagram of properly stable stacks and good moduli spaces, then the image of a primarily strong cycle is primarily strong. Using the methods of Section \ref{sec:op-->strong} this would imply that the image of $A^*_{\op}(X)$ is contained in $A^*_{\pst}(\X/X)_\QQ$.
\end{remark}

\begin{example}
  We note it is not obvious how to generalize Theorem \ref{thm:conj-pst}(\ref{pst::quot-sing}) from the case where $X$ is smooth to the case where it has finite quotient singularities. The reason for this is that even when
the stack $\X$ is smooth and tame,
topologically strong substacks $\Z\subseteq\X$ need not be primarily strong. For example, let $\X=[\AA^2/(\ZZ/2)]$ with $\ZZ/2$ acting on $\AA^2$ by $(x,y)\mapsto(-x,-y)$. Then $X=\spec k[x^2,xy,y^2]$. If $\Z=V(x)$, then $\pi^{-1}(\pi(Z))=V(x^2,xy)$. Notice that $\Z$ is topologically strong since the reduction of $V(x^2,xy)$ is $V(x)$; however, $V(x^2,xy)$ has an embedded point so $\Z$ is not primarily strong.
\end{example}

We end the paper with two questions:

\begin{question}
If $\X$ is a smooth connected properly stable Artin stack with good moduli space $\pi\colon\X\to X$, then are $A^*_{\st}(\X/X)_\QQ$ and $A^*_{\pst}(\X/X)_\QQ$ equal?
\end{question}

In general, if $k > 0$ and $\alpha \in A^k(\X)$ is the image of an
operational class $c \in A^k_{\op}(X)$ then $\alpha$ must, in addition
to being represented by a topologically strong cycle, satisfy $\alpha
\cdot [BG_\x]=0$ for any residual gerbe $BG_\x \subseteq \X$ where $\x$ is closed
point of $\X$. The reason for this is as follows: let $x$ be the image
of $\x$ in $X$ and let $p$ be the restriction of $\pi$ to a morphism
$\x \to x$. Functorial properties of operational Chow groups imply that
$\alpha \cap [BG_\x] = p^* (c \cap [x])$ where $p^*$ is the flat pullback.
However, $c \cap [x] = 0$ as $c \in A^k_{\op}(X)$ with $k > 0$ and $[x]$ is a zero-cycle.

We ask whether this necessary condition is sufficient.

\begin{question}
Let $k>0$ and $\X$ be a smooth connected properly stable Artin stack with good moduli space $\pi\colon\X\to X$. Suppose $\alpha\in A^k_{\tst}(\X/X)_\QQ$ and $\alpha \cdot BG_{\emph\x}=0$ for all residual gerbes $BG_{\emph\x} \subseteq \X$. Then do we have $\alpha=\pi^*c$ for some $c\in A^k_{\op}(X)_\QQ$? Do we have $\alpha\in A^k_{\pst}(\X/X)_\QQ$?
\end{question}

A natural testing ground for the conjectures and questions raised in this section is the theory of toric Artin stacks \cite{GeSa:15a, GeSa:15b}.

\end{document}